\providecommand{\U}[1]{\protect\rule{.1in}{.1in}}
\newtheorem{theorem}{Theorem}
\newtheorem{conclusion}[theorem]{Conclusion}
\newtheorem{definition}[theorem]{Definition}
\newtheorem{proposition}[theorem]{Proposition}
\newenvironment{proof}[1][Proof]{\noindent\textbf{#1.} }{\ \rule{0.5em}{0.5em}}
\begin{document}

\title{The Segal--Bargmann transform for unitary groups in the large-$N$ limit}
\author{Brian C. Hall\thanks{Supported in part by National Science Foundation grants
DMS-1001328 and DMS-1301534.}\\University of Notre Dame}
\maketitle

\begin{abstract}
This paper describes results of the author with B. K. Driver and T. Kemp
concerning the large-$N$ limit of the Segal--Bargmann transform for the
unitary group $U(N).$ We consider the transform on matrix-valued functions
that are polynomials in a single variable in $U(N).$ We show that in the
large-$N$ limit, the transform maps functions of this type to single-variable
polynomial functions on the complex group $GL(N;\mathbb{C}).$ This result was
conjectured by Ph. Biane and was also proved independently by G. C\'{e}bron.

The first main ingredient in our proof of this result is an \textquotedblleft
asymptotic product rule\textquotedblright\ for the Laplacian on $U(N),$ which
allows us to compute explicitly the leading-order large-$N$ behavior of the
heat operator on $U(N).$ The second main ingredient in the proof is the
phenomenon of \textquotedblleft concentration of traces,\textquotedblright\ in
which the relevant heat kernel measures are concentrating onto sets where the
trace of any power of the variable is constant.

\end{abstract}

\section{Overview}

Let $U(N)$ denote the group of $N\times N$ unitary matrices and let
$GL(N;\mathbb{C})$ denote the group of all invertible $N\times N$ matrices
with complex entries. Then $GL(N;\mathbb{C})$ is the \textquotedblleft
complexification\textquotedblright\ of $U(N)$ in the sense of \cite[Sect.
3]{H1}. Let $\rho_{t}^{N}$ and $\mu_{t}^{N}$ denote the heat kernel measures
on $U(N)$ and $GL(N;\mathbb{C}),$ respectively, based at the identity and
defined with respect to certain left-invariant Riemannian metrics. Let
$\mathcal{H}L^{2}(GL(N;\mathbb{C}),\mu_{t}^{N})$ denote the space of
holomorphic functions on $GL(N;\mathbb{C})$ that are square integrable with
respect to $\mu_{t}^{N}.$ The \textbf{Segal--Bargmann transform} $B_{t}^{N}$
for $U(N),$ as introduced in \cite{H1}, is a unitary map of $L^{2}%
(U(N),\rho_{t}^{N})$ onto $\mathcal{H}L^{2}(GL(N;\mathbb{C}),\mu_{t}^{N}).$
The transform itself is defined by
\[
B_{t}^{N}(f)=(e^{t\Delta/2}f)_{\mathbb{C}},
\]
where $e^{t\Delta/2}$ is the time-$t$ forward heat operator and $(\cdot
)_{\mathbb{C}}$ denotes analytic continuation from $U(N)$ to $GL(N;\mathbb{C}%
).$ (See \cite{Bull} for a survey of this and related constructions.)

We may extend the transform to act on functions on $U(N)$ with values in
$M_{N}(\mathbb{C}),$ space of all $N\times N$ matrices with complex entries.
The extension is accomplished by applying the scalar transform
\textquotedblleft entrywise.\textquotedblright\ We denote the resulting
\textbf{boosted\ Segal--Bargmann transform} by $\mathbf{B}_{t}^{N}$; it maps
$L^{2}(U(N),\rho_{t}^{N};M_{N}(\mathbb{C}))$ onto $\mathcal{H}L^{2}%
(GL(N;\mathbb{C}),\mu_{t}^{N};M_{N}(\mathbb{C})).$ As proposed by Philippe
Biane in \cite{Biane2}, we apply $\mathbf{B}_{t}^{N}$ to
\textbf{single-variable polynomial functions} on $U(N)$\ that is, functions of
the form%
\begin{equation}
f(U)=c_{0}I+c_{1}U+c_{2}U^{2}+\cdots+c_{N}U^{N},\quad U\in U(N), \label{polyU}%
\end{equation}
where $c_{0},\ldots,c_{N}$ are constants.

If we apply $\mathbf{B}_{t}^{N}$ to such a polynomial function, the result
will typically \textit{not} be a polynomial function on $GL(N;\mathbb{C}).$
Rather, the result will be a \textbf{trace polynomial function}\ on
$GL(N;\mathbb{C}),$ that is, a linear combination of functions of the form%
\begin{equation}
Z^{k}\mathrm{tr}(Z)\mathrm{tr}(Z^{2})\cdots\mathrm{tr}(Z^{M}),\quad Z\in
GL(N;\mathbb{C}), \label{tracePoly}%
\end{equation}
where $k$ and $M$ are non-negative integers. Here $\mathrm{tr}(\cdot)$ is the
\textbf{normalized trace} given by%
\begin{equation}
\mathrm{tr}(A)=\frac{1}{N}\sum_{j=1}^{N}A_{jj} \label{normalizedTrace}%
\end{equation}
for any $A\in M_{N}(\mathbb{C}).$

Although for any one fixed value of $N,$ the boosted transform $\mathbf{B}%
_{t}^{N}$ does not map polynomial functions on $U(N)$ to polynomial functions
on $GL(N;\mathbb{C}),$ there is a sense in which \textbf{the large-}%
$N$\textbf{\ limit} of $\mathbf{B}_{t}^{N}$ does have this property. To
understand how this works, let consider the example of the matrix-valued
function
\[
f(U)=U^{2}
\]
on $U(N).$ Then, according to Example 3.5 of \cite{DHK}, we have%
\begin{equation}
\mathbf{B}_{t}^{N}(f)(Z)=e^{-t}\left[  \cosh(t/N)Z^{2}-t\frac{\sinh(t/N)}%
{t/N}Z\mathrm{tr}(Z)\right]  . \label{btUsquared1}%
\end{equation}

If we formally let $N$ tend to infinity in (\ref{btUsquared1}), we obtain%
\begin{equation}
\lim_{N\rightarrow\infty}\mathbf{B}_{t}^{N}(f)(Z)=e^{-t}[Z^{2}-tZ\mathrm{tr}%
(Z)]. \label{btUsquared2}%
\end{equation}
The right-hand side of (\ref{btUsquared2}) is, apparently, still a trace
polynomial and not a single-variable polynomial as in (\ref{polyU}). There is,
however, another limiting phenomenon that occurs when $N$ tends to infinity,
in addition to the convergence of the coefficients of $Z^{2}$ and
$Z\mathrm{tr}(Z)$ in (\ref{btUsquared1}), namely, the phenomenon of
\textbf{concentration of trace}.

As $N$ tends to infinity, the function $\mathrm{tr}(U^{k})$ in $L^{2}%
(U(N),\rho_{t}^{N})$ converges as $N$ tends to infinity to a certain constant
$\nu_{k}(t),$ in the sense that%
\[
\lim_{N\rightarrow\infty}\left\Vert \mathrm{tr}(U^{k})-\nu_{k}(t)\right\Vert
_{L^{2}(U(N),\rho_{t}^{N})}=0.
\]
What this means, more accurately, is that the \textit{measure} $\rho_{t}^{N} $
on $U(N)$ is concentrating, as $N$ tends to infinity with $t$ fixed, onto the
set where~$\mathrm{tr}(U^{k})=\nu_{k}(t).$ A similar concentration of trace
phenomenon occurs in the space $\mathcal{H}L^{2}(GL(N;\mathbb{C}),\mu_{t}%
^{N}),$ except that in this case, all of the traces concentrate to the value
1:%
\[
\lim_{N\rightarrow\infty}\left\Vert \mathrm{tr}(Z^{k})-1\right\Vert
_{L^{2}(GL(N;\mathbb{C}),\mu_{t}^{N})}=0.
\]
(See Theorem \ref{concentrate.thm} for a more general version of these
concentration results.)

Thus, the \textquotedblleft correct\textquotedblright\ way to evaluate the
large-$N$ limit in (\ref{btUsquared1}) is in two stages. First, we take the
limit as $N$ tends to infinity of the coefficients of $Z^{2}$ and
$Z\mathrm{tr}(Z),$ as in (\ref{btUsquared2}). Second, we replace
$\mathrm{tr}(Z)$ by the constant 1. The result is%
\begin{equation}
\lim_{N\rightarrow\infty}\mathbf{B}_{t}^{N}(f)(Z)=e^{-t}[Z^{2}-tZ].
\label{btUsquared3}%
\end{equation}
Note that the right-hand side of (\ref{btUsquared3}) is, for each fixed value
of $t,$ a polynomial in $Z.$

In \cite{DHK}, we show that a similar phenomenon occurs in general. Given any
polynomial $p$ in a single variable, let $p_{N}$ denote the matrix-valued
function on $U(N)$ obtained by plugging a variable $U\in U(N)$ into $p,$ as in
(\ref{polyU}). We also allow $p_{N}$ to denote the similarly defined function
on $GL(N;\mathbb{C}).$

\begin{theorem}
[Driver--Hall--Kemp]\label{mainIntro.thm}Let $p$ be a polynomial in a single
variable. Then for each fixed $t>0,$ there exists a unique polynomial $q_{t}$
in a single variable such that%
\begin{equation}
\lim_{N\rightarrow\infty}\left\Vert \mathbf{B}_{t}^{N}(p_{N})-(q_{t}%
)_{N}\right\Vert _{L^{2}(GL(N;\mathbb{C}),\mu_{t}^{N};M_{N}(\mathbb{C}))}=0.
\label{introLim}%
\end{equation}

\end{theorem}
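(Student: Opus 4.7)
The plan is to separate the two limiting phenomena already visible in the worked example $f(U)=U^{2}$: first show that $\mathbf{B}_{t}^{N}(p_{N})$ is, for each $N$, a trace polynomial whose coefficients (viewed as functions of $N$) converge as $N\to\infty$; then invoke concentration of trace to collapse the resulting limiting trace polynomial on $GL(N;\mathbb{C})$ down to a single-variable polynomial $q_{t}(Z)$.

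For the first stage I would study the action of the Laplacian $\Delta$ on the algebra $\mathcal{T}$ of matrix-valued trace polynomials in $U$. Because $\Delta$ is built from left-invariant vector fields, the Leibniz rule expresses $\Delta(FG)$ in terms of $\Delta F$, $\Delta G$, and a ``cross'' term $\sum_{j}(X_{j}F)(X_{j}G)$, where the $X_{j}$ run over an orthonormal basis of the Lie algebra $\mathfrak{u}(N)$. When the factors $F,G$ are traces of powers of $U$, the normalization $1/N$ in (\ref{normalizedTrace}) forces these cross terms to carry an explicit factor of $1/N^{2}$; this is the asymptotic product rule advertised in the abstract. Consequently $\Delta$ preserves, up to an error of order $1/N^{2}$, a well-chosen finite-dimensional subspace of $\mathcal{T}$ attached to $p$ (for instance, the span of monomials $U^{k}\mathrm{tr}(U^{k_{1}})\cdots\mathrm{tr}(U^{k_{M}})$ with $k+k_{1}+\cdots+k_{M}\le\deg p$). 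Iterating this via a Duhamel/Gr\"onwall argument, one obtains an explicit limiting trace polynomial $\tilde{q}_{t}$ governed by a ``master'' operator on single-variable trace polynomials, and the coefficients of $\mathbf{B}_{t}^{N}(p_{N})=(e^{t\Delta/2}p_{N})_{\mathbb{C}}$ converge on $GL(N;\mathbb{C})$ after analytic continuation to the corresponding coefficients of $\tilde{q}_{t}$.

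For the second stage, define $q_{t}(Z)$ by replacing every occurrence of $\mathrm{tr}(Z^{k})$ in $\tilde{q}_{t}$ by the constant $1$; this produces a single-variable polynomial. The difference $\mathbf{B}_{t}^{N}(p_{N})-(q_{t})_{N}$ then splits into (i) a piece measuring how far the actual trace-polynomial coefficients of $\mathbf{B}_{t}^{N}(p_{N})$ are from their $N\to\infty$ limits, which is small in the matrix-valued $L^{2}(\mu_{t}^{N})$ norm provided that powers $Z^{k}$ and traces $\mathrm{tr}(Z^{k_{j}})$ satisfy uniform $L^{2}$ (in fact $L^{4}$) bounds under $\mu_{t}^{N}$; and (ii) a piece of the form $Z^{k}\bigl(\mathrm{tr}(Z^{k_{1}})\cdots\mathrm{tr}(Z^{k_{M}})-1\bigr)$, which is small by the concentration-of-trace statement in $\mathcal{H}L^{2}(GL(N;\mathbb{C}),\mu_{t}^{N})$ together with Cauchy--Schwarz.

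The step I expect to be the main obstacle is the rigorous execution of the first stage: the asymptotic product rule is a one-step statement about $\Delta$, but we need to control $e^{t\Delta/2}$, and a naive iteration creates trace-polynomial terms of unbounded complexity whose $1/N^{2}$ errors must be controlled uniformly in both $N$ and the iteration depth. I would plan to equip $\mathcal{T}$ with a carefully chosen weighted norm (graded by total polynomial degree and by the number of trace factors) in which both the master operator and the $1/N^{2}$ remainder act boundedly, so that the Duhamel series for $e^{t\Delta/2}$ converges uniformly in $N$ and the remainder vanishes in the limit. A secondary but nontrivial task is establishing the uniform moment bounds on matrix entries and traces under $\mu_{t}^{N}$ needed to convert convergence of coefficients into matrix-valued $L^{2}$ convergence.
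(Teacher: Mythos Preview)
Your proposal is essentially the paper's approach: lift $\Delta_N$ to an operator $\mathcal{D}_N=\mathcal{D}-N^{-2}\mathcal{L}$ on the abstract trace-polynomial algebra $\mathbb{C}[u,\mathbf{v}]$, exponentiate to get a limiting trace polynomial $e^{t\mathcal{D}/2}(u^k)$, and then apply the concentration-of-trace map $v_j\mapsto 1$ on the $GL(N;\mathbb{C})$ side to obtain $q_t$. The obstacle you anticipate largely dissolves once one observes (as the paper does) that \emph{both} the master operator $\mathcal{D}$ and the remainder $\mathcal{L}$ preserve the finite-dimensional space $\mathbb{C}^{(k)}[u,\mathbf{v}]$ of fixed trace degree $k=\deg p$; hence the full $\mathcal{D}_N$ already acts on a single finite-dimensional space for all $N$, the convergence $e^{t\mathcal{D}_N/2}\to e^{t\mathcal{D}/2}$ there is elementary linear algebra, and no weighted norms or delicate Duhamel iteration are required.
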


If, for example, $p$ is the polynomial $p(u)=u^{2},$ then $q_{t}$ is the
polynomial given by%
\[
q_{t}(z)=e^{-t}(z^{2}-tz),
\]
so that
\[
(q_{t})_{N}(Z)=e^{-t}(Z^{2}-tZ),\quad Z\in GL(N;\mathbb{C}),
\]
as on the right-hand side of (\ref{btUsquared3}).

In \cite{DHK}, we also show that the map $p\mapsto q_{t}$ coincides with the
\textquotedblleft free Hall transform\textquotedblright\ of Biane, denoted
$\mathcal{G}^{t}$ in \cite{Biane2}. Although it was conjectured in
\cite{Biane2} that $\mathcal{G}^{t}$ is the large-$N$ limit of $\mathbf{B}%
_{t}^{N} $ as in (\ref{introLim}), Biane actually constructs $\mathcal{G}^{t}
$ by using a free probability version of the analysis of Gross--Malliavin
\cite{GrossMalliavin}. Theorem \ref{mainIntro.thm} was also proved
independently by G. Cebr\'{o}n \cite{Ceb}, using substantially different
methods. Besides using very different methods from \cite{Ceb}, the paper
\cite{DHK} establishes a \textquotedblleft two-parameter\textquotedblright%
\ version of Theorem \ref{mainIntro.thm}, as described in Section
\ref{twoParam.sec}. Section \ref{induct.sec} gives an inductive procedure for
computing the polynomials $q_{t}$ in Theorem \ref{mainIntro.thm}.

A key tool in proving the results described above is the \textbf{asymptotic
product rule} for the Laplacian on $U(N).$ This rule states that---on certain
classes of functions and for large values of $N$---the Laplacian behaves like
a \textit{first-order} differential operator. That is to say, in the usual
product rule for the Laplacian, the cross terms are small compared to the
other two terms. The asymptotic product rule provides the explanation for the
concentration of trace phenomenon and is also the key tool we use in deriving
a recursive formula for the polynomials $q_{t}$ in Theorem \ref{mainIntro.thm}.

\section{The Laplacian and Segal--Bargmann transform on $U(N)$}

We consider $U(N),$ the group of $N\times N$ unitary matrices. The Lie algebra
$u(N)$ of $U(N)$ is the $N^{2}$-dimensional real vector space consisting of
$N\times N$ matrices $X$ with $X^{\ast}=-X.$ We use on $u(N)$ the following
(real) inner product $\left\langle \cdot,\cdot\right\rangle _{N}$:%
\begin{equation}
\left\langle X,Y\right\rangle _{N}=N\operatorname{Re}(\mathrm{Trace}(X^{\ast
}Y)), \label{unInner}%
\end{equation}
where $\mathrm{Trace}$ is the ordinary trace, $\mathrm{Trace}(A)=\sum
_{j}A_{jj}.$ The motivation for the scaling by a factor of $N$ will be
explained shortly.

We think of the Lie algebra $u(N)$ as being the tangent space at the identity
of the manifold $U(N).$ We can then extend the inner product (\ref{unInner})
on $u(N)$ uniquely to a left-invariant Riemannian structure on $U(N).$
Actually, since the inner product in (\ref{unInner}) is invariant under the
adjoint action of $U(N),$ this Riemannian structure is bi-invariant, that is,
invariant under both the left and right actions of $U(N)$ on itself.

Associated to the Riemannian structure on $U(N)$ there is the Laplacian
$\Delta_{N}$, which we take to be a \textit{negative} operator. Let me
emphasize that $\Delta_{N}$ is always defined with respect to the Riemannian
structure whose value at the identity is given by (\ref{unInner}), with the
scaling by a factor of $N.$ For any $X\in u(N),$ we can define a
left-invariant vector field $\tilde{X}$ on $U(N)$ by the formula%
\begin{equation}
(\tilde{X}f)(U)=\left.  \frac{d}{dt}f\left(  Ue^{tX}\right)  \right\vert
_{t=0}. \label{leftInv}%
\end{equation}
If $\{X_{j}\}$ is an orthonormal basis for $u(N)$ with respect to the inner
product (\ref{unInner}), then $\Delta_{N}$ may be computed as%
\[
\Delta_{N}=\sum_{j=1}^{N^{2}}\tilde{X}_{j}^{2}.
\]

As a simple example, we may consider the action of $\Delta_{N}$ on the matrix
entries for the standard representation of $U(N),$ that is, functions of the
form $f_{jk}(U)=U_{jk}.$ It follows from the $k=1$ case of Proposition
\ref{lapPower.prop} below that%
\begin{equation}
\Delta_{N}(U_{jk})=-U_{jk}. \label{eigen}%
\end{equation}
That is, the functions $f_{jk}$ are eigenvalues for $\Delta_{N}$ with
eigenvalue $-1,$ for all $N$ and all $j,k.$ In particular, the normalization
of the inner product in (\ref{unInner}) has the result that the eigenvalues of
$\Delta_{N}$ in the standard representation are \textit{independent of }$N.$
By contrast, if we had omitted the factor of $N$ in (\ref{unInner}), we would
have had $\Delta_{N}(U_{jk})=-NU_{jk},$ which would not bode well for trying
to take the $N\rightarrow\infty$ limit. Note that the inner product and the
Laplacian scale oppositely; the factor of $N$ in (\ref{unInner}) produces a
factor of $1/N$ in the formula for $\Delta_{N},$ which scales the eigenvalues
from $-N$ to $-1.$

For any $t>0,$ let $e^{t\Delta_{N}/2}$ denote the time-$t$ (forward) heat
operator. If $P_{t}^{N}$ denotes the \textbf{heat kernel} at the identity on
$U(N),$ then we may compute the heat operator as%
\[
(e^{t\Delta_{N}/2}f)(U)=\int_{U(N)}P_{t}^{N}(UV^{-1})f(V)~dV,
\]
where $dV$ is the Riemannian volume measure on $U(N),$ which is a bi-invariant
Haar measure. It is shown in Section 4 of \cite{H1} that for each fixed $t>0,$
the function $P_{t}^{N}$ admits a unique holomorphic extension from $U(N)$ to
the complex group $GL(N;\mathbb{C}).$ Here, $GL(N;\mathbb{C})$ is the
complexification of $U(N)$ in the sense of Section 3 of \cite{H1}.

The paper \cite{H1} considers, more generally, any connected compact Lie group
with a bi-invariant Riemannian metric. In particular, we could replace the
inner product (\ref{unInner}) on $u(N)$ by any other multiple of the real
Hilbert--Schmidt inner product. The particular scaling of the inner product in
(\ref{unInner}) turns out, however, to be the right one for taking the
large-$N$ limit. We will explore this matter further is Section \ref{lim.sec}.

We consider also the \textbf{heat kernel measure }$\rho_{t}^{N}$ (based at the
identity) on $U(N),$ given by%
\[
d\rho_{t}^{N}(U)=P_{t}^{N}(U)~dU,
\]
where $dU$ is the Riemannian volume measure on $U(N),$ and the associated
Hilbert space, $L^{2}(U(N),\rho_{t}^{N}).$ Let $\mathcal{H}(GL(N;\mathbb{C}))$
denote the space of holomorphic functions on $GL(N;\mathbb{C}).$ For each
fixed $t>0,$ the \textbf{Segal--Bargmann transform} is then linear map
\[
B_{t}^{N}:L^{2}(U(N),\rho_{t}^{N})\rightarrow\mathcal{H}(GL(N;\mathbb{C}))
\]
given by%
\[
(B_{t}^{N}f)(Z)=\int_{U(N)}P_{t}^{N}(ZV^{-1})f(V)~dV,\quad Z\in
GL(N;\mathbb{C}),
\]
where $P_{t}^{N}(ZV^{-1})$ refers to the holomorphic extension of $P_{t}^{N}$
from $U(N)$ to $GL(N;\mathbb{C}).$ Equivalently, we may write $B_{t}^{N}f$ as%
\[
B_{t}^{N}f=(e^{t\Delta_{N}/2}f)_{\mathbb{C}}
\]
where $(\cdot)_{\mathbb{C}}$ denotes the analytic continuation of a function
from $U(N)$ to $GL(N;\mathbb{C}).$

The expression (\ref{unInner}) also defines a real-valued inner product on the
Lie algebra $gl(N;\mathbb{C})$ of $GL(N;\mathbb{C}).$ This inner product then
determines a left-invariant Riemannian metric on $GL(N;\mathbb{C}).$ We let
$\mu_{t}^{N}$ denote the associated heat kernel measure on $GL(N;\mathbb{C}),$
based at the identity.

\begin{theorem}
For each $t>0,$ the map $B_{t}^{N}$ is a unitary map of $L^{2}(U(N),\rho
_{t}^{N})$ onto $\mathcal{H}L^{2}(GL(N;\mathbb{C}),\mu_{t}^{N}),$ where
$\mathcal{H}L^{2}$ denotes the space of square-integrable holomorphic functions.
\end{theorem}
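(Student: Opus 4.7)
The plan is to reduce unitarity to a computation on matrix coefficients of irreducible representations of $U(N)$, using Peter--Weyl to supply a dense orthogonal system on both sides. By Peter--Weyl, the finite span of matrix coefficients $\pi_{ij}$ of irreducible unitary representations of $U(N)$ is dense in $L^{2}(U(N),dU)$, hence also in $L^{2}(U(N),\rho_{t}^{N})$ since $P_{t}^{N}$ is continuous and strictly positive. On the range side, each irrep $\pi$ admits a unique holomorphic extension $\pi^{\mathbb{C}}$ to $GL(N;\mathbb{C})$, and the matrix coefficients $\pi^{\mathbb{C}}_{ij}$ are polynomial in the entries of $Z$ and $\det(Z)^{-1}$; one expects these to be dense in $\mathcal{H}L^{2}(GL(N;\mathbb{C}),\mu_{t}^{N})$ by Gaussian-tail estimates on $\mu_{t}^{N}$.

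Because the inner product on $u(N)$ is $\mathrm{Ad}$-invariant, the Laplacian $\Delta_{N}$ is central and acts as a scalar on each irrep: $\Delta_{N}\pi_{ij}=-c_{\pi}\pi_{ij}$, where $c_{\pi}$ is the Casimir eigenvalue. Hence $e^{t\Delta_{N}/2}\pi_{ij}=e^{-tc_{\pi}/2}\pi_{ij}$, and analytic continuation yields $B_{t}^{N}\pi_{ij}=e^{-tc_{\pi}/2}\pi^{\mathbb{C}}_{ij}$. The heart of the proof is then to establish the identity
\[
\int_{U(N)}\overline{\pi_{ij}(U)}\,\sigma_{kl}(U)\,d\rho_{t}^{N}(U)\;=\;e^{-tc_{\pi}}\int_{GL(N;\mathbb{C})}\overline{\pi^{\mathbb{C}}_{ij}(Z)}\,\sigma^{\mathbb{C}}_{kl}(Z)\,d\mu_{t}^{N}(Z)
\]
for all irreducibles $\pi,\sigma$ and indices $(i,j),(k,l)$. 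My plan is an averaging argument in the polar decomposition $GL(N;\mathbb{C})=U(N)\cdot\exp(iu(N))$: express $\mu_{t}^{N}$ via the associated pushforward, integrate out the $U(N)$-factor using that the left-invariant Laplacian still commutes with right multiplication by $U(N)$ (because $u(N)$ is $\mathrm{Ad}$-invariant), and recognize the remaining Gaussian-type integral on $\exp(iu(N))$ as an evaluation of the heat semigroup $e^{t\Delta_{N}}$ at the identity applied to $|\pi_{ij}|^{2}$-type products, which couples back to $\rho_{t}^{N}$.

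Once this is in hand, $B_{t}^{N}$ is an isometry on the dense span of matrix coefficients and so extends uniquely to an isometric embedding of $L^{2}(U(N),\rho_{t}^{N})$ into $\mathcal{H}L^{2}(GL(N;\mathbb{C}),\mu_{t}^{N})$; its range contains every $\pi^{\mathbb{C}}_{ij}$, which span a dense subspace, so the embedding is surjective. The main obstacle is the norm/orthogonality identity above: because the metric on $GL(N;\mathbb{C})$ is only \emph{left}-invariant and $GL(N;\mathbb{C})$ is non-compact, $\mu_{t}^{N}$ is itself neither bi-invariant nor a clean product measure in polar coordinates. Controlling the interaction of $\mu_{t}^{N}$ with the $U(N)$-action on the $\exp(iu(N))$ factor is the core technical step, and it is there that the $\mathrm{Ad}$-invariance of the inner product on $u(N)$ (rather than mere left-invariance of the metric on $GL(N;\mathbb{C})$) must be used decisively.
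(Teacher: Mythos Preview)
The paper does not give a proof of this theorem: immediately after the statement it simply records that ``This result is Theorem $1'$ in \cite{H1}'' and notes that it holds for arbitrary connected compact Lie groups. So there is no in-paper argument to compare your proposal against.

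For what it is worth, your overall architecture---Peter--Weyl on the compact side, the Casimir acting as a scalar on each irreducible so that $B_t^N$ sends $\pi_{ij}$ to $e^{-tc_\pi/2}\pi^{\mathbb{C}}_{ij}$, and then a norm identity checked on matrix coefficients---is indeed the skeleton of the original proof in \cite{H1}. The part you flag as the ``main obstacle'' is genuinely where the work lies, but the argument in \cite{H1} is not the polar-decomposition averaging you sketch. Rather, Hall computes the $\mu_t^N$-norms of holomorphic matrix coefficients directly from an identity expressing the complex heat kernel in terms of the analytically continued compact heat kernel (his ``averaging lemma''), together with Schur orthogonality; the density of the $\pi^{\mathbb{C}}_{ij}$ in $\mathcal{H}L^2(GL(N;\mathbb{C}),\mu_t^N)$ is a separate nontrivial step. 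Your polar-coordinates heuristic, as written, does not obviously produce the needed identity: $\mu_t^N$ is not a product measure in the decomposition $U(N)\cdot\exp(iu(N))$, and ``integrating out the $U(N)$-factor'' does not straightforwardly reduce the complex integral to an evaluation of $e^{t\Delta_N}$ at the identity. So while the strategy is sound, the specific mechanism you propose for the core step would need to be replaced by (or sharpened into) the heat-kernel identity actually used in \cite{H1}.
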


This result is Theorem $1^{\prime}$ in \cite{H1}. The result holds more
generally for an arbitrary compact Lie group $K$ together with its
complexification $K_{\mathbb{C}}.$ If one performs the analogous construction
on the commutative Lie group $\mathbb{R}^{n},$ one obtains (modulo minor
differences of normalization) the classical Segal--Bargmann considered by
Segal \cite{Segal1,Segal2} and Bargmann \cite{Bargmann}. Actually, one can
construct a unitary Segal--Bargmann transform for connected Lie groups of
\textquotedblleft compact type,\textquotedblright\ a class that includes both
$\mathbb{R}^{n}$ and $U(N).$ (See \cite{Driver} and \cite{newform}.)

We may extend the transform to a \textquotedblleft boosted\textquotedblright%
\ transform $\mathbf{B}_{t}^{N},$ acting on functions $f:U(N)\rightarrow
M_{N}(\mathbb{C}),$ by applying the scalar transform $B_{t}^{N}$
\textquotedblleft componentwise.\textquotedblright\ That is, $\mathbf{B}%
_{t}^{N}f$ is the holomorphic function $F:GL(N;\mathbb{C})\rightarrow
M_{N}(\mathbb{C})$ whose $(j,k)$ entry is $B_{t}^{N}(f_{jk}).$ We define the
norm of matrix-valued functions on $U(N)$ or $GL(N;\mathbb{C})$ as follows:
\begin{align}
\left\Vert f\right\Vert _{L^{2}(U(N),\rho_{t}^{N};M_{N}(\mathbb{C}))}^{2}  &
=\int_{U(N)}\mathrm{tr}(f(U)^{\ast}f(U))~d\rho_{t}^{N}(U)\label{MnNorm}\\
\left\Vert f\right\Vert _{L^{2}(GL(N;\mathbb{C}),\mu_{t}^{N};M_{N}%
(\mathbb{C}))}^{2}  &  =\int_{GL(N;\mathbb{C})}\mathrm{tr}(f(Z)^{\ast
}f(Z))~d\mu_{t}^{N}(Z), \label{MnNorm2}%
\end{align}
where $\mathrm{tr}(\cdot)$ is the normalized trace defined in
(\ref{normalizedTrace}). Note that the normalization of the Hilbert--Schmidt
norm in (\ref{MnNorm}) and (\ref{MnNorm2}) is different from the one we use in
(\ref{unInner}) to define the Laplacian $\Delta_{N}.$ The normalizations in
(\ref{MnNorm}) and (\ref{MnNorm2}) ensure that the norm of the constant
function $f(U)=I$ is 1 in either Hilbert space.

\section{The large-$N$ limit\label{lim.sec}}

Since Segal's work on the Segal--Bargmann transform was for an
infinite-dimensional Euclidean space, it is natural to try for a version of
the transform for infinite-dimensional Lie groups. In \cite{HallSengupta},
Sengupta and I construct one example of such a transform, for the path group
over a compact Lie group. The results in \cite{HallSengupta} are motivated by
earlier results of Gross \cite{GrossUniqueness} and Gross--Malliavin
\cite{GrossMalliavin}.

One could also attempt to take the limit of the Segal--Bargmann transform on a
nested family of compact Lie groups, such as $U(N).$ Indeed, the study of the
large-$N$ limit of the heat kernel on $U(N)$ also arises in the study of the
\textquotedblleft master field\textquotedblright\ on the plane, which is the
large-$N$ limit of (Euclidean) Yang--Mills theory with structure group $U(N).$
(See \cite{Singer}, \cite{Sengupta}, \cite{AnshelevichSengupta}, \cite{levy}
for mathematical results concerning the master field in the plane.) Although
the Segal--Bargmann transform is not typically part of this analysis (but see
\cite{AHS}), some of the same methods that we use in \cite{DHK} are employed
in the study of the master field.

The most obvious approach to the large-$N$ limit for the Segal--Bargmann
transform on $U(N)$ would be to use the real Hilbert--Schmidt inner product on
each Lie algebra $u(N)$:
\begin{equation}
\left\langle X,Y\right\rangle =\operatorname{Re}[\mathrm{Trace}(X^{\ast}Y)],
\label{unnormalized}%
\end{equation}
This approach is natural in that the inner product on $u(N)$ agrees with the
restriction to $u(N)\subset u(N+1)$ of the inner product on $u(N+1).$

Results of M. Gordina \cite{Go1,Go2}, however, show that this approach does
not work. Let $\gamma_{t}^{N}$ denote the heat kernel measure on
$GL(N;\mathbb{C})$ with respect to the metric determined by
(\ref{unnormalized}). (We reserve the notation $\mu_{t}^{N}$ for the heat
kernel with respect to the metric determined by (\ref{unInner}).) Gordina's
approach is to study the target space for the Segal--Bargmann transform,
$\mathcal{H}L^{2}(GL(N;\mathbb{C}),\gamma_{t}^{N}).$ Let us assume, for the
moment, that the measures $\gamma_{t}^{N}$ on $GL(N;\mathbb{C})$ have a
reasonable large-$N$ limit $\gamma_{t}^{\infty}$ on some \textquotedblleft
version\textquotedblright\ of $GL(\infty;\mathbb{C}).$ (We might interpret
$GL(\infty;\mathbb{C})$ as being, for example, the group of all bounded,
invertible operators on a Hilbert space.) One would then expect to be able to
compute the norm of elements of $\mathcal{H}L^{2}(GL(\infty;\mathbb{C}%
),\gamma_{t}^{\infty})$ by the Taylor expansion method of Driver and Gross
\cite{Driver,DriverGross}. This method expresses the $L^{2}$ norm of a
holomorphic function $F$ on a complex Lie group, with respect to a heat kernel
measure, as a certain sum of squares of left-invariant derivatives of $F$,
evaluated at the identity. Gordina shows that if one uses the Hilbert--Schmidt
norm on the Lie algebra $gl(\infty;\mathbb{C}),$ then the relevant sum of
squares of derivatives is always infinite, unless the holomorphic function in
question is constant. (See Theorem 8.1 in \cite{Go1}.)

We see, then, that there cannot be any nonconstant holomorphic functions on
$GL(\infty;\mathbb{C})$ that have finite $L^{2}$ norm with respect to the
hypothetical limiting measure $\gamma_{t}^{\infty}.$ This result is presumably
telling us that there is, in fact, no limiting measure $\gamma_{t}^{\infty}$
in the first place. Thus, the target space of the hoped-for Segal--Bargmann
transform for $U(\infty)$ is not well defined.

The preceding discussion shows that if we use the \textit{un-normalized}
Hilbert--Schmidt inner product (\ref{unnormalized}) on $u(N)$---and thus also
on $gl(N;\mathbb{C})$---then we do not obtain a well-defined Segal--Bargmann
transform in the $N\rightarrow\infty$ limit. This fact motivates the
introduction of the \textit{normalized} Hilbert--Schmidt inner product
(\ref{unInner}) that we will use throughout the remainder of the paper. Recall
that with the normalization of the inner product in (\ref{unInner}), we have
$\Delta_{N}(U_{jk})=-U_{jk}.$ That is, the factor of $N$ in (\ref{unInner})
(which translates into a factor of $1/N$ in the associated Laplacian) keeps
the eigenvalues of $\Delta_{N}$ in the matrix entries from blowing up as $N$
tends to infinity, which gives us some hope of obtaining a well-defined
transform in the limit.

\section{Concentration properties of the heat kernel measures}

In \cite{Biane2}, Biane proposed studying the large-$N$ behavior of the
Segal--Bargmann transform on $U(N)$ using the normalization of the inner
product in (\ref{unInner}). Biane also introduced in \cite{Biane2} the idea of
studying the transform on a certain very special class of matrix-valued
functions on $U(N),$ namely the single-variable polynomial functions in
(\ref{polyU}). A main result of \cite{DHK}, which was conjectured in
\cite{Biane2}, is that \textit{in the large-}$N$\textit{\ limit}, such
functions map to single-variable polynomial functions on $GL(N;\mathbb{C}).$
(See Theorem \ref{mainIntro.thm} in the overview.) In this section, we try to
understand this result from a conceptual standpoint, by looking into the
large-$N$ behavior of the heat kernel measures $\rho_{t}^{N}$ on $U(N)$ and
$\mu_{t}^{N}$ on $GL(N;\mathbb{C}).$ With Biane's scaling of the metrics,
these measures have interesting concentration properties for large $N,$ which
help explain the large-$N$ behavior of the Segal--Bargmann transform. Although
the results of this section are not actually used in the proof of Theorem
\ref{mainIntro.thm}, they provide a helpful way of thinking about \textit{why}
that theorem should hold.

Recall that rescaling the inner product on $u(N)$ by a factor of $N$ (as in
(\ref{unInner})) has the effect of rescaling the Laplacian by a factor of
$1/N.$ This rescaling is designed to keep the Laplacian and heat operator from
blowing up as $N$ tends to infinity. In some sense, however, the rescaling
does \textit{too} good a job of controlling things, in that the limiting
transform is well defined but, on certain classes of functions, trivial.
Biane's passage to matrix-valued functions allows the large-$N$ limit to be
both well defined and interesting.

Let us now look more closely into these issues. Results of Biane
\cite{Biane1}, E. Rains \cite{Rains}, and T. Kemp \cite{Kemp} may be
interpreted as saying that, in the large-$N$ limit, the heat kernel measure
$\rho_{t}^{N}$ on $U(N)$ \textit{concentrates onto a single conjugacy class}.
To make this claim more precise, let us note that the conjugacy class of a
matrix $U\in U(N)$ is determined by the list $\lambda_{1},\ldots,\lambda_{n}$
of its eigenvalues, where $\left\vert \lambda_{j}\right\vert =1.$ This list of
eigenvalues can be encoded into the \textit{empirical eigenvalue distribution}
of $U,$ which is the probability measure $\chi^{U}$ on $S^{1}$ given by%
\[
\chi^{U}=\frac{1}{N}(\delta_{\lambda_{1}}+\cdots+\delta_{\lambda_{n}}).
\]

If $U$ is chosen randomly from $U(N)$ with distribution $\rho_{t}^{N},$ then
the empirical eigenvalue measure $\chi^{U}$ is a random measure on $S^{1}.$ In
the large-$N$ limit, however, the empirical eigenvalue distribution ceases to
be random. Rather, $\chi^{U}$ becomes \textit{constant almost surely} with
respect to $\rho_{t}^{N},$ and equal to a certain probability measure $\nu
_{t}$ on $S^{1}.$ The measure $\nu_{t}$ was introduced by Biane in
\cite{Biane1} and various forms of convergence of $\chi^{U}$ to the constant
measure $\nu_{t}$ were established in \cite{Biane1}, \cite{Rains}, and
\cite{Kemp}. (See also work of T. L\'{e}vy \cite{levy} for similar results in
the case of the other families of compact classical groups.)

What this means is that for large $N,$ most of the mass of the heat kernel
measure $\rho_{t}^{N}$ is concentrated on matrices $U$ for which $\chi^{U}$ is
very close (in the weak topology) to the measure $\nu_{t}.$ Thus, most of the
mass of $\rho_{t}^{N}$ is concentrated in a small region in the set of
conjugacy classes, namely the region where the empirical eigenvalue
distributions are close to $\nu_{t}.$

Suppose we consider the transform $B_{t}^{N}$ on class functions, that is,
functions $f:U(N)\rightarrow\mathbb{C}$ that are constant on each conjugacy
class, i.e.,%
\[
f(VUV^{-1})=f(U),
\]
for all $U,V\in U(N).$ The concentration behavior of $\rho_{t}^{N}$ means that
in the large-$N$ limit, all class functions in $L^{2}(U(N),\rho_{t}^{N})$ are
simply constants. (For example, the class function $f(U):=\mathrm{tr}(U^{3})$
becomes equal in the limit to the constant value $\nu_{3}(t),$ where $\nu
_{3}(t)$ is the third moment of Biane's measure $\nu_{t}.$) Thus, at least on
class functions, the scalar transform $B_{t}^{N}$ becomes uninteresting in the limit.

Although one could conceivably get something interesting by considering
complex-valued functions that are not class functions, one could instead
retain simple behavior under conjugation, but extend the transform to
matrix-valued functions. We consider, then, \textbf{conjugation-equivariant
functions}, that is, functions $f:U(N)\rightarrow\mathbb{C}$ satisfying%
\[
f(VUV^{-1})=Vf(U)V^{-1}
\]
for $U,V\in U(N).$ Although the boosted transform $\mathbf{B}_{t}^{N}$ does
not---for any one fixed $N$---preserve the space of single-variable polynomial
functions (see (\ref{btUsquared1})), it does preserve the space of conjugation
equivariant functions.

\begin{proposition}
\label{conjEq.prop}The boosted Segal--Bargmann transform $\mathbf{B}_{t}^{N}$
maps every conjugation-equivariant function on the group $U(N)$ to a
conjugation-equivariant holomorphic function on the group $GL(N;\mathbb{C}).$
\end{proposition}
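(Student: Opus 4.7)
The plan is to exploit two symmetries: the bi-invariance of the metric (\ref{unInner}) (which forces $\Delta_N$ to commute with conjugation), and the uniqueness of analytic continuation from $U(N)$ to $GL(N;\mathbb{C})$. Fix $V\in U(N)$ and let $\sigma_V g(U):=g(VUV^{-1})$ denote pullback along conjugation, acting on scalar functions. Because the inner product in (\ref{unInner}) is $\mathrm{Ad}$-invariant, $\Delta_N$ commutes with $\sigma_V$, and hence so does the scalar heat operator $e^{t\Delta_N/2}$. The boosted operator acts on matrix-valued $f$ entry-wise, and left- or right-multiplication by the constant matrices $V$, $V^{-1}$ trivially commutes with an entry-wise scalar operator. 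So if $f(VUV^{-1})=Vf(U)V^{-1}$, applying $e^{t\Delta_N/2}$ entry-wise to both sides yields
\[
(e^{t\Delta_N/2}f)(VUV^{-1}) = V\,(e^{t\Delta_N/2}f)(U)\,V^{-1}
\]
for all $U,V\in U(N)$. Thus $F:=e^{t\Delta_N/2}f$ is already conjugation-equivariant on $U(N)$.

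Next I promote this identity to $GL(N;\mathbb{C})$. Writing $\mathbf{B}_t^N f = F_{\mathbb{C}}$ for the entry-wise holomorphic extension, fix $V\in U(N)$ and consider the two $M_N(\mathbb{C})$-valued maps $Z\mapsto F_{\mathbb{C}}(VZV^{-1})$ and $Z\mapsto V F_{\mathbb{C}}(Z) V^{-1}$. Both are holomorphic on $GL(N;\mathbb{C})$ (the first because $Z\mapsto VZV^{-1}$ is a holomorphic automorphism of $GL(N;\mathbb{C})$), and they agree on $U(N)$ by the previous paragraph. Since $U(N)$ is a totally real submanifold of maximal real dimension in the connected complex manifold $GL(N;\mathbb{C})$, any holomorphic function on $GL(N;\mathbb{C})$ is determined by its restriction to $U(N)$, so the two maps coincide on all of $GL(N;\mathbb{C})$. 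This establishes equivariance of $F_{\mathbb{C}}$ under conjugation by every $V\in U(N)$.

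To upgrade to $V\in GL(N;\mathbb{C})$ (if one reads the proposition in that stronger sense), fix $Z\in GL(N;\mathbb{C})$ and run the same uniqueness argument in the other slot: both $V\mapsto F_{\mathbb{C}}(VZV^{-1})$ and $V\mapsto V F_{\mathbb{C}}(Z) V^{-1}$ are holomorphic in $V\in GL(N;\mathbb{C})$, and they agree for $V\in U(N)$ by the preceding step, so they agree everywhere. The only genuinely non-formal input in this whole argument is bi-invariance of $\Delta_N$; both analytic-continuation steps are routine applications of the identity principle, which is the main tool one would expect to need. I do not anticipate a serious obstacle — the only delicacy is keeping track of two successive analytic continuations, once in $Z$ and once in $V$.
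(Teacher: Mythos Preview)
Your argument is correct and is the natural one: use $\mathrm{Ad}$-invariance of the metric to get $[\Delta_N,\sigma_V]=0$ for $V\in U(N)$, observe that multiplication by constant matrices commutes with an entry-wise scalar operator, and then analytically continue first in $Z$ and then in $V$ using that $U(N)$ is maximally totally real in $GL(N;\mathbb{C})$. The paper itself does not give a proof in the text; it simply cites Theorem~2.3 of \cite{DHK}, and your argument is essentially the one that reference carries out.
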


See Theorem 2.3 in \cite{DHK}. One may now ask what happens to such
conjugation-equivariant functions as the measure $\rho_{t}^{N}$ concentrates
onto a single conjugacy class. This question is answered by the following result.

\begin{proposition}
Suppose $C$ is a conjugacy class in either $U(N)$ or $GL(N;\mathbb{C})$ and
that $f:C\rightarrow M_{N}(\mathbb{C})$ is a conjugation equivariant function.
Then there exists a polynomial $p$ in a single variable such that%
\[
f(A)=p(A)
\]
for all $A\in C.$
\end{proposition}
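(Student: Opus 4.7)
Every $A\in C$ has the form $VA_0V^{-1}$ for a fixed $A_0\in C$ and some $V$ in the ambient group $G$ (where $G=U(N)$ or $G=GL(N;\mathbb{C})$). If I can produce a polynomial $p$ in a single variable with $f(A_0)=p(A_0)$, then conjugation equivariance immediately gives
\[
f(VA_0V^{-1})=Vf(A_0)V^{-1}=Vp(A_0)V^{-1}=p(VA_0V^{-1})
\]
for every $V$, so $f(A)=p(A)$ on all of $C$. The proposition thus reduces to the single matrix $A_0$.

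The key observation is that $f(A_0)$ commutes with the centralizer $G_{A_0}=\{V\in G:VA_0=A_0V\}$: for any such $V$, equivariance gives $f(A_0)=f(VA_0V^{-1})=Vf(A_0)V^{-1}$. I would then upgrade this to the assertion that $f(A_0)$ lies in the double commutant $\{A_0\}''$ inside $M_N(\mathbb{C})$, where $\{A_0\}'=\{X\in M_N(\mathbb{C}):XA_0=A_0X\}$ is the full associative commutant. When $G=GL(N;\mathbb{C})$, the group $G_{A_0}$ is precisely the set of invertible elements of the finite-dimensional unital subalgebra $\{A_0\}'$, and is Zariski-open and dense in $\{A_0\}'$ (each element of such an algebra has finite spectrum and is a limit of invertible perturbations); commutation being a closed condition, $f(A_0)$ centralizes all of $\{A_0\}'$. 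When $G=U(N)$, normality of $A_0$ makes $\{A_0\}'$ a unital $*$-subalgebra whose unitary group is $G_{A_0}$, and the $\mathbb{C}$-span of unitaries in any finite-dimensional $*$-subalgebra is the whole subalgebra. Either way $f(A_0)\in\{A_0\}''$.

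The proof concludes by invoking the classical bicommutant identity $\{A_0\}''=\mathbb{C}[A_0]$ for a single matrix. This is proved by using the primary decomposition $\mathbb{C}^N=\bigoplus_\lambda \ker(A_0-\lambda I)^N$---preserved by every element of $\{A_0\}'$---to reduce to the single-eigenvalue case, hence to the nilpotent case via $A_0=\lambda I+N$, where a Jordan-basis computation identifies $\{N\}''$ with the upper-triangular Toeplitz forms, which coincide with $\mathbb{C}[N]$; the Chinese Remainder Theorem reassembles these per-eigenvalue polynomials into a single $p$ with $p(A_0)=f(A_0)$. The main obstacle is this bicommutant computation when $A_0$ has nontrivial Jordan structure or high eigenvalue multiplicity; for the $U(N)$ case, where $A_0$ is automatically semisimple, one can bypass the general theorem by applying Schur's lemma to each eigenspace, so that the restriction of $G_{A_0}$ to a $\mu_i$-eigenspace of multiplicity $n_i$ acts as $U(n_i)$ and forces $f(A_0)$ to be a scalar there, with those scalars then Lagrange-interpolated at the distinct eigenvalues to produce $p$.
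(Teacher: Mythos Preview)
Your argument is correct. The reduction to a single matrix $A_{0}$ via equivariance, the observation that $f(A_{0})$ centralizes $G_{A_{0}}$, the density/spanning arguments upgrading this to $f(A_{0})\in\{A_{0}\}''$, and the classical identification $\{A_{0}\}''=\mathbb{C}[A_{0}]$ all hold as you describe.

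As for comparison: the present paper does not actually prove this proposition---it simply cites Proposition~2.5 of \cite{DHK}. The only hint given here is the remark that ``in general, the polynomial $p$ in the proposition will have degree $N-1$,'' which is consistent with the interpolation-at-eigenvalues picture you sketch at the end for the unitary case. Your double-commutant route is a clean, uniform way to handle both the $U(N)$ and $GL(N;\mathbb{C})$ cases simultaneously; the alternative you mention for $U(N)$ (diagonalize, apply Schur on each eigenspace, then Lagrange-interpolate) is more elementary and makes the degree bound $N-1$ transparent, at the cost of not immediately covering non-diagonalizable $A_{0}$ in $GL(N;\mathbb{C})$.
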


See Proposition 2.5 in \cite{DHK}. In general, the polynomial $p$ in the
proposition will have degree $N-1.$ In the large-$N$ limit, then, a
conjugation-equivariant function might not be a polynomial, but some sort of
limit of single-variable polynomial functions.

Now, it is not known whether the empirical eigenvalue distribution with
respect to the measures $\mu_{t}^{N}$ on $GL(N;\mathbb{C})$ becomes
deterministic in the large-$N$ limit. (But see related results in
\cite{Kemp}.) Nevertheless, it is shown in Section 4.1 of \cite{DHK} that
traces in $GL(N;\mathbb{C})$ become constant in the limit. Thus, it seems
reasonable to expect that the measures $\mu_{t}^{N}$ also concentrate onto a
single conjugacy class for large $N.$

We have, then, a simple conceptual explanation for Theorem \ref{mainIntro.thm}%
, which asserts that in the large-$N$ limit, $\mathbf{B}_{t}^{N}$ maps
single-variable polynomial functions on $U(N)$ to functions of the same sort
on $GL(N;\mathbb{C}).$ If $p$ is a polynomial and $p_{N}$ is the function on
$U(N)$ obtained by plugging a variable $U\in U(N)$ into $p,$ then $p_{N}$ is
certainly conjugation equivariant. Thus, $\mathbf{B}_{t}^{N}(p_{N})$ is a
conjugation-equivariant function on $GL(N;\mathbb{C}).$ But in the large-$N$
limit, we expect---based on the concentration behavior of the heat kernel
measure $\mu_{t}^{N}$---that \textit{every} conjugation-equivariant function
in $\mathcal{H}L^{2}(GL(N;\mathbb{C}),\mu_{t}^{N};M_{N}(\mathbb{C}))$ is at
least a limit of single-variable polynomial functions.

In our proof in \cite{DHK} of Theorem \ref{mainIntro.thm}, we use the
concentration properties of the heat kernel measures in a more concrete way.
We show that, as will be explained in the remainder of this paper, that the
transform of a single-variable polynomial function on $U(N)$ is a
\textit{trace polynomial} on $GL(N;\mathbb{C}),$ that is, a linear combination
of functions of the form in (\ref{tracePoly}). As $N$ tends to infinity, the
heat kernel measure $\mu_{t}^{N}$ concentrates onto the set where
$\mathrm{tr}(Z^{l})=1$ for all $l.$ (See Theorem \ref{concentrate.thm} for
precise statement of this claim.) Thus, in the large-$N$ limit, trace
polynomials are indistinguishable from single-variable polynomial functions.

\section{The action of the Laplacian on trace polynomials}

We will be interested in the action of $\Delta_{N}$ on \textbf{trace
polynomials}, that is, on matrix-valued functions that are linear combinations
of functions of the form%
\begin{equation}
U^{k}\mathrm{tr}(U)\mathrm{tr}(U^{2})\cdots\mathrm{tr}(U^{n})
\label{tracePoly1}%
\end{equation}
for some $k$ and $n.$ (Actually, we should really consider a more generally
trace \textit{Laurent }polynomials, where we allow negative powers of $U$ and
traces thereof. Nevertheless, for simplicity, I will consider in this paper
only positive powers, which are all that are strictly necessary for the main
results of \cite{DHK}. ) The formula the action of $\Delta_{N}$ on such
functions was originally worked out by Sengupta; see Definition 4.2 and Lemma
4.3 in \cite{Sengupta}. We begin by recording the formula for the Laplacian of
a single power of $U.$

\begin{proposition}
\label{lapPower.prop}For each positive integer $k,$ we have%
\begin{equation}
\Delta_{N}(U^{k})=-kU^{k}-2\sum_{m=1}^{k-1}mU^{m}\mathrm{tr}(U^{k-m}),
\label{lapPower1}%
\end{equation}
and%
\begin{equation}
\Delta_{N}(\mathrm{tr}(U^{k}))=-k\mathrm{tr}(U^{k})-2\sum_{m=1}^{k-1}%
m\mathrm{tr}(U^{m})\mathrm{tr}(U^{k-m}). \label{lapPower2}%
\end{equation}

\end{proposition}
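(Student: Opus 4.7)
The plan is to compute $\Delta_N(U^k)$ directly from the definition $\Delta_N = \sum_j \tilde{\beta}_j^{\,2}$, where $\{\beta_j\}_{j=1}^{N^2}$ is any orthonormal basis of $u(N)$ with respect to the scaled inner product $\langle\cdot,\cdot\rangle_N$ in (\ref{unInner}). For a single $X \in u(N)$, differentiating $(Ue^{tX})^k$ twice in $t$ at $t=0$ and separating the Leibniz expansion into the ``same factor differentiated twice'' and ``two distinct factors each differentiated once'' pieces gives
\[
\tilde{X}^{\,2}(U^k) \;=\; \sum_{i=1}^{k} U^i X^2 U^{k-i} \;+\; 2\sum_{\substack{a+b+c=k\\ a,b\geq 1,\; c\geq 0}} U^a X U^b X U^c,
\]
the factor $2$ reflecting the two orderings of the two insertion sites.

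The next step is to sum over the orthonormal basis $\{\beta_j\}$. This reduces everything to two ``magic sum'' identities for $u(N)$ with the scaling of (\ref{unInner}):
\[
\sum_{j=1}^{N^2} \beta_j^{\,2} \;=\; -I, \qquad \sum_{j=1}^{N^2} \beta_j A \beta_j \;=\; -\mathrm{tr}(A)\, I \qquad (A \in M_N(\mathbb{C})).
\]
I would establish both identities by a direct calculation on the standard orthonormal basis of $u(N)$: the rescaled diagonal generators $iE_{jj}/\sqrt{N}$ together with the rescaled off-diagonal skew-Hermitian generators $(E_{jk}-E_{kj})/\sqrt{2N}$ and $i(E_{jk}+E_{kj})/\sqrt{2N}$ for $j<k$. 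The factor of $N$ in (\ref{unInner}) is precisely what converts the ordinary trace $\mathrm{Trace}$ appearing naturally in such a computation into the normalized trace $\mathrm{tr}$ on the right-hand side of the second identity, and it is the same scaling that already yielded the eigenvalue $-1$ in (\ref{eigen}).

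Substituting these identities into the expression for $\sum_j \tilde{\beta}_j^{\,2}(U^k)$ yields $-kU^k$ from the first piece and, from the second, the triple sum $-2\sum_{a,b\geq 1,\; c\geq 0,\; a+b+c=k} U^{a+c}\,\mathrm{tr}(U^b)$. Collecting by $m = a+c$, the number of admissible pairs $(a,c)$ with $a\geq 1$, $c\geq 0$, $a+c=m$ is exactly $m$, so the triple sum collapses to $-2\sum_{m=1}^{k-1} m\, U^m\, \mathrm{tr}(U^{k-m})$, which is (\ref{lapPower1}). The identity (\ref{lapPower2}) is then immediate: $\Delta_N$ acts entrywise on matrix-valued functions and therefore commutes with $\mathrm{tr}$, so applying $\mathrm{tr}$ to both sides of (\ref{lapPower1}) produces (\ref{lapPower2}).

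The main obstacle is the second magic identity $\sum_j \beta_j A \beta_j = -\mathrm{tr}(A)\, I$. It is simultaneously what injects the trace term into (\ref{lapPower1})---and thereby explains why $\Delta_N$ fails to preserve the space of plain polynomials in $U$---and what forces the appearance of the \emph{normalized} trace rather than $\mathrm{Trace}$. Verifying it carefully is the one place where the $1/N$ built into the inner-product normalization (\ref{unInner}) earns its keep, and it is the ingredient that makes the entire trace-polynomial framework of the paper possible.
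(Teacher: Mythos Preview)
Your argument is correct and follows essentially the same route as the paper: compute $\tilde X_j^2(U^k)$ by the Leibniz rule, sum over an orthonormal basis using the two magic identities $\sum_j X_j^2=-I$ and $\sum_j X_j A X_j=-\mathrm{tr}(A)I$ (the paper's (\ref{magic1}) and (\ref{magic2})), and then take the normalized trace to pass from (\ref{lapPower1}) to (\ref{lapPower2}). The paper only illustrates the case $k=2$ explicitly and leaves the general combinatorics to the reader; your reindexing by $m=a+c$ is exactly the ``bit of combinatorics'' it refers to.
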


See Theorem 3.3 in \cite{DHK}. Note that when $k=1,$ the sums on the
right-hand sides of (\ref{lapPower1}) and (\ref{lapPower2}) are empty. Thus,
actually, $\Delta_{N}(U)=-U$ and $\Delta_{N}(\mathrm{tr}(U))=-\mathrm{tr}(U).$
Since, by definition, $\Delta_{N}$ acts \textquotedblleft
entrywise\textquotedblright\ on matrix-valued functions, the assertion that
$\Delta_{N}(U)=-U$ is equivalent to the assertion that $\Delta_{N}%
(U_{jk})=-U_{jk}$ for all $j$ and $k.$ A sketch of the proof of this result is
given in Section \ref{magic.sec}.

Let us make a few observations about the formulas in Proposition
\ref{lapPower.prop}. First, since we are supposed to be considering
matrix-valued functions, we should really think of $\mathrm{tr}(U^{k})$ as the
matrix-valued function $U\mapsto\mathrm{tr}(U^{k})I.$ Nevertheless, if we
chose to think of $\mathrm{tr}(U^{k})$ as a scalar-valued function, the
formula in (\ref{lapPower2}) would continue to hold. Second, the Laplacian
$\Delta_{N}$ commutes with applying the trace, so the right-hand side of
(\ref{lapPower2}) is what one obtains by applying the normalized trace to the
right-hand side of (\ref{lapPower1}). Third, the formulas for $\Delta
_{N}(U^{k})$ and $\Delta_{N}(\mathrm{tr}(U^{k}))$ are \textquotedblleft
independent of $N,$\textquotedblright\ meaning that the coefficients of the
various terms on the right-hand side of (\ref{lapPower1}) and (\ref{lapPower2}%
) do not depend on $N.$ This independence holds only because we have chosen to
express things in terms of the \textit{normalized} trace; if we used the
ordinary trace, there would be a factor of $1/N$ in the second term on the
right-hand side of both equations.

Suppose, now, that we wish to apply $\Delta_{N}$ to a product, such as the
function $f(U)=U^{k}\mathrm{tr}(U^{l}).$ As usual with the Laplacian, there is
a product rule that involves three terms, two \textquotedblleft Laplacian
terms\textquotedblright---namely $\Delta_{N}(U^{k})\mathrm{tr}(U^{l})$ and
$U^{k}\Delta_{N}(\mathrm{tr}(U^{l}))$---along with a cross term. The Laplacian
terms can, of course, be computed using (\ref{lapPower1}) and (\ref{lapPower2}%
). The cross term, meanwhile, turns out to be
\[
-\frac{2kl}{N^{2}}U^{k+l}.
\]
Thus, we have%
\[
\Delta_{N}(U^{k}\mathrm{tr}(U^{l}))=\Delta(U^{k})\mathrm{tr}(U^{l}%
)+U^{k}\Delta(\mathrm{tr}(U^{l}))-\frac{2kl}{N^{2}}U^{k+l}.
\]
Again, a sketch of the proof of this result is given in Section
\ref{magic.sec}.

The behavior in the preceding example turns out to be typical: \textit{The
cross term is always of order} $1/N^{2}.$ Thus, to leading order in $N,$ we
may compute the Laplacian of a function of the form (\ref{tracePoly1}) as the
sum of $n+1$ terms, where each term applies the Laplacian to one of the
factors (using (\ref{lapPower1}) or (\ref{lapPower1})) and leaves the other
factors unchanged.

It should be emphasized that this leading-order behavior applies only if (as
in (\ref{tracePoly1})) we have collected together all of the untraced powers
of $U.$ Thus, for example, if we chose to write $U^{5}$ as $U^{3}U^{2},$ it
would \textit{not} be correct to say that $\Delta_{N}(U^{5})$ is $\Delta
_{N}(U^{3})U^{2}+U^{3}\Delta(U^{2})$ plus a term of order $1/N^{2}.$

The smallness of the cross terms leads to the following \textquotedblleft
asymptotic product rule\textquotedblright\ for the action of $\Delta_{N}$ on
trace polynomials.

\begin{proposition}
[Asymptotic product rule]\label{product.prop}Suppose that $f$ and $g$ are
trace polynomials and that either $f$ or $g$ is \textquotedblleft
scalar,\textquotedblright\ meaning that it contains no untraced powers of $U.$
Then%
\[
\Delta_{N}(fg)=\Delta_{N}(f)g+f\Delta_{N}(g)+O(1/N^{2}),
\]
where $O(1/N^{2})$ denotes a fixed trace polynomial multiplied by $1/N^{2}.$
\end{proposition}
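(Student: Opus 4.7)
The plan is to apply the general second-order product rule for the Laplacian and verify that the resulting cross term is automatically of order $1/N^2$. Since every left-invariant vector field satisfies $\tilde X(fg)=\tilde X(f)\,g+f\,\tilde X(g)$ (as matrix-valued functions on $U(N)$), squaring and summing over an orthonormal basis $\{X_\alpha\}_{\alpha=1}^{N^2}$ of $u(N)$ gives
\begin{equation*}
\Delta_N(fg) \;=\; \Delta_N(f)\,g + f\,\Delta_N(g) + 2\sum_{\alpha=1}^{N^2}\tilde X_\alpha(f)\,\tilde X_\alpha(g).
\end{equation*}
By bilinearity in $f$ and $g$, I may assume each is a monomial trace polynomial, and the hypothesis that (say) $g$ has no untraced powers of $U$ lets me write $g=\mathrm{tr}(U^{l_1})\cdots\mathrm{tr}(U^{l_m})$ and $f=U^k\cdot\mathrm{tr}(U^{l_1'})\cdots\mathrm{tr}(U^{l_r'})$.

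Next I compute $\tilde X_\alpha$ on the two building blocks. From (\ref{leftInv}) one has $\tilde X_\alpha(U^k)=\sum_{i=1}^{k}U^iX_\alpha U^{k-i}$, and by the cyclicity of the trace
\begin{equation*}
\tilde X_\alpha\bigl(\mathrm{tr}(U^l)\bigr) \;=\; \frac{l}{N}\,\mathrm{Trace}(X_\alpha U^l).
\end{equation*}
The crucial point is the explicit factor of $1/N$: every time $\tilde X_\alpha$ hits a traced factor, the normalization in (\ref{normalizedTrace}) drops a $1/N$ out front. Applying Leibniz, each summand of $\tilde X_\alpha(g)$ is a trace polynomial (independent of $\alpha$) times $\tfrac{1}{N}\mathrm{Trace}(X_\alpha U^{l_p})$, while each summand of $\tilde X_\alpha(f)$ is either of this same scalar form or of matrix form $(\text{trace poly})\cdot U^iX_\alpha U^{k-i}$ (the latter carrying no $1/N$).

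When one multiplies out $\tilde X_\alpha(f)\,\tilde X_\alpha(g)$ and sums over $\alpha$, the $\alpha$-dependent part of each cross summand reduces to one of two sums of ``magic'' type over the $u(N)$ basis (the identities Section \ref{magic.sec} uses to prove Proposition \ref{lapPower.prop}):
\begin{equation*}
\sum_{\alpha=1}^{N^2}\mathrm{Trace}(X_\alpha A)\,X_\alpha \;=\; -\frac{1}{N}\,A, \qquad \sum_{\alpha=1}^{N^2}\mathrm{Trace}(X_\alpha A)\,\mathrm{Trace}(X_\alpha B) \;=\; -\mathrm{tr}(AB).
\end{equation*}
For a matrix--scalar cross term the first identity contributes an additional $1/N$ which, together with the $1/N$ already pulled out of $\tilde X_\alpha(g)$, yields $1/N^2$; for a scalar--scalar cross term the two $1/N$'s from the two normalized traces already deliver $1/N^2$ directly. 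In either case every summand of the cross term is a fixed trace polynomial times $1/N^2$, which is the claim. The only genuine obstacle is establishing the two magic identities; the rest is Leibniz bookkeeping. The ``scalar'' hypothesis cannot be dropped: one then encounters cross terms like $\sum_\alpha(\tilde X_\alpha U)(\tilde X_\alpha U)=\sum_\alpha UX_\alpha UX_\alpha=-U\,\mathrm{tr}(U)$, which are $O(1)$.
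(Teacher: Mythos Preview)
Your proof is correct and matches the paper's approach: the paper derives Proposition~\ref{product.prop} from Proposition~\ref{lapPoly.prop} in Section~\ref{magic.sec}, which is exactly your Leibniz expansion of $\Delta_N(fg)$ followed by an application of the magic formulas (\ref{magic3}) and (\ref{magic4}) to the cross terms where the two $X_j$'s land in different factors. Your accounting pulls the $1/N$ factors out of the normalized trace first and then uses the ordinary-Trace versions of the identities, whereas the paper quotes (\ref{magic3}) and (\ref{magic4}) with the $1/N^2$ already built in, but this is only a difference in bookkeeping.
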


The meaning of the expression \textquotedblleft fixed trace
polynomial\textquotedblright\ will be made more precise in the next section.
The assumption that one of the trace polynomials be scalar is essential; if
$f(U)=U^{3}$ and $g(U)=U^{2},$ then the asymptotic product rule does not apply.

The asymptotic product rule may be interpreted as saying that in the situation
of Proposition \ref{product.prop}, the Laplacian \textit{behaves like a
first-order differential operator}. Furthermore, if, say, $f$ is scalar, then
it turns out that $\Delta_{N}^{n}(f)$ is scalar for all $n,$ which means that
we can apply the asymptotic product rule repeatedly. Thus, by a standard power
series argument, together with some simple estimates (Section 4 of
\cite{DHK}), we conclude that%
\begin{equation}
e^{t\Delta_{N}/2}(fg)=e^{t\Delta_{N}/2}(f)e^{t\Delta_{N}/2}(g)+O(1/N^{2}).
\label{expProduct}%
\end{equation}
The asymptotic product rule, along with its exponentiated form
(\ref{expProduct}), is the key to many of the results in \cite{DHK}.

If we restrict our attention to scalar trace polynomials, then the asymptotic
product rule in Proposition \ref{product.prop} will always apply. It is thus
natural to expect that the large-$N$ limit of the action of $\Delta_{N}$ on
scalar trace polynomials can be described by a first-order differential
operator. This expectation is fulfilled in the next section; see Proposition
\ref{diffOps.prop}.

Using the asymptotic product rule, along with Proposition \ref{lapPower.prop},
we can readily compute---to leading order in $N$---the Laplacian of any trace polynomial.

\begin{proposition}
\label{lap1.prop}For any non-negative integers $k$ and $l_{1},\ldots,l_{M},$
we have%
\begin{align*}
\Delta_{N}(U^{k}\mathrm{tr}(U^{l_{1}})\cdots\mathrm{tr}(U^{l_{M}}))  &
=\Delta_{N}(U^{k})\mathrm{tr}(U^{l_{1}})\cdots\mathrm{tr}(U^{l_{M}})\\
&  +U^{k}\Delta_{N}(\mathrm{tr}(U^{l_{1}}))\mathrm{tr}(U^{l_{2}}%
)\cdots\mathrm{tr}(U^{l_{M}})\\
&  +\cdots\\
&  +U^{k}\mathrm{tr}(U^{l_{1}})\cdots\mathrm{tr}(U^{l_{M-1}})\Delta
_{N}(\mathrm{tr}(U^{l_{M}}))\\
&  +O(1/N^{2}),
\end{align*}
where $O(1/N^{2})$ denotes a fixed trace polynomial multiplied by $1/N^{2}.$
\end{proposition}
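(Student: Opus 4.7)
The plan is to prove Proposition \ref{lap1.prop} by induction on $M$, the number of trace factors, with the asymptotic product rule (Proposition \ref{product.prop}) as the workhorse. The key observation is that each factor of the form $\mathrm{tr}(U^{l_j})$ is scalar in the sense required by Proposition \ref{product.prop}, so we can peel off one trace factor at a time.

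For the base case $M=0$, the statement reduces to $\Delta_N(U^k)=\Delta_N(U^k)$, which is trivial. For the inductive step, I would write
\[
U^{k}\mathrm{tr}(U^{l_{1}})\cdots\mathrm{tr}(U^{l_{M}}) = F \cdot G,
\]
where $F=U^{k}\mathrm{tr}(U^{l_{1}})\cdots\mathrm{tr}(U^{l_{M-1}})$ and $G=\mathrm{tr}(U^{l_{M}})$. Since $G$ is scalar, Proposition \ref{product.prop} applies and gives
\[
\Delta_{N}(FG) = \Delta_{N}(F)\,G + F\,\Delta_{N}(G) + O(1/N^{2}).
\]
By the inductive hypothesis, $\Delta_{N}(F)$ equals the sum of $M$ terms in which $\Delta_N$ acts on exactly one factor of $F$, plus an $O(1/N^2)$ error; multiplying through by $G=\mathrm{tr}(U^{l_{M}})$ reproduces the first $M$ terms on the right-hand side of Proposition \ref{lap1.prop}. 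The remaining piece $F\,\Delta_{N}(G)$ is exactly the last term, in which $\Delta_N$ hits $\mathrm{tr}(U^{l_M})$.

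The only real bookkeeping issue is to verify that the $O(1/N^2)$ errors combine cleanly. The error from the inductive hypothesis is a fixed trace polynomial times $1/N^{2}$, and multiplying it by $G=\mathrm{tr}(U^{l_M})$ again yields a fixed trace polynomial times $1/N^{2}$; the error from the single application of Proposition \ref{product.prop} at this stage is of the same form. Since we are making a bounded number ($M$) of applications of the asymptotic product rule, and each step introduces only one new $O(1/N^2)$ contribution that is subsequently multiplied by fixed trace polynomials, the total error remains of order $1/N^2$.

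The main potential obstacle is a purely cosmetic one: making sure we always peel off a \emph{scalar} factor, since Proposition \ref{product.prop} fails without this hypothesis. By always splitting off a single trace factor $\mathrm{tr}(U^{l_j})$ (rather than, say, grouping $U^k$ with one of the traces), this hypothesis is automatic at every stage of the induction, so no genuine difficulty arises.
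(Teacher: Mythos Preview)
Your argument is correct and matches the paper's approach at this point: the paper introduces Proposition~\ref{lap1.prop} with the sentence ``Using the asymptotic product rule, along with Proposition~\ref{lapPower.prop}, we can readily compute---to leading order in $N$---the Laplacian of any trace polynomial,'' and your induction on $M$, peeling off one scalar trace factor at a time via Proposition~\ref{product.prop}, is exactly the spelling-out of that remark.

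One small point of comparison worth noting: the paper's \emph{underlying} justification for both Proposition~\ref{product.prop} and Proposition~\ref{lap1.prop} is the direct magic-formula computation in Section~\ref{magic.sec} (Proposition~\ref{lapPoly.prop}), which produces the explicit $O(1/N^{2})$ correction terms (Terms~II and~III) in one shot rather than by iterated application of the two-factor product rule. That direct route has the advantage of giving the precise form of the error, whereas your inductive route only shows it is \emph{some} trace polynomial times $1/N^{2}$. For the purposes of Proposition~\ref{lap1.prop} as stated, however, the two approaches are equivalent, and yours is the natural way to read the paper's one-line deduction.
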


\section{Polynomials and trace polynomials\label{poly.sec}}

We now give a more precise meaning to the phrase \textquotedblleft fixed trace
polynomial\textquotedblright\ in Propositions \ref{product.prop} and
\ref{lap1.prop}, and thus to the notion of $O(1/N^{2})$ occurring in those
propositions. Along the way, we will explore a subtle distinction between
polynomials and trace polynomials.

\begin{definition}
\label{scalar.def}Let $\mathbb{C}[u,\mathbf{v}]$ denote the space of
polynomials in $u$ and $\mathbf{v},$ where $u$ is a single indeterminate and
where $\mathbf{v}=(v_{1},v_{2},v_{3},\ldots)$ denotes an infinite list of
indeterminates. An element $p$ of $\mathbb{C}[u,\mathbf{v}]$ is said to be
\textbf{scalar} if $p(u,\mathbf{v})$ is independent of $u.$
\end{definition}

Note that by definition of the term \textquotedblleft
polynomial,\textquotedblright\ any given element of $\mathbb{C}[u,\mathbf{v}]
$ depends on only finitely many of the variables $v_{1},v_{2},\ldots.$

\begin{definition}
\label{eval.def}Suppose $p$ is an element of $\mathbb{C}[u,\mathbf{v}].$ Then
for each $N\geq1,$ define the function $p_{N}:U(N)\rightarrow M_{N}%
(\mathbb{C})$ by%
\[
p_{N}(U)=p(U,\mathrm{tr}(U),\mathrm{tr}(U^{2}),\mathrm{tr}(U^{3}),\ldots).
\]
That is, $p_{N}$ is obtained by making the substitution $u=U$ and
$v_{j}=\mathrm{tr}(U^{j}),$ $j=1,2,\ldots.$ Functions of the form $p_{N}$ on
$U(N)$ are called \textbf{trace polynomial functions}, or simply trace polynomials.

A function $f:U(N)\rightarrow M_{N}(\mathbb{C})$ is a \textbf{scalar trace
polynomial} if it can be represented as $f=p_{N}$ where $p\in\mathbb{C}%
[u,\mathbf{v}]$ is independent of $u.$
\end{definition}

In \cite{DHK}, we consider a more general class, in which we allow both
negative powers of $U$ and traces of negative powers of $U.$ For simplicity,
we limit ourselves here to non-negative powers.

It is important to distinguish between the \textquotedblleft
abstract\textquotedblright\ polynomial $p$, which is an element of
$\mathbb{C}[u,\mathbf{v}],$ and the associated trace polynomial
\textit{function} $p_{N}:U(N)\rightarrow M_{N}(\mathbb{C}).$ As it turns out,
it is possible to have a nonzero polynomial $p$ for which $p_{N}=0$ for
certain values of $N.$ In the $N=2$ case, for example, the Cayley--Hamilton
theorem tells us that for all $A\in M_{2}(\mathbb{C}),$ we have%
\[
A^{2}-\mathrm{Trace}(A)A+\det(A)I=0.
\]
Meanwhile, in $M_{2}(\mathbb{C}),$ we have the easily verified identity%
\[
\det(A)=\frac{1}{2}((\mathrm{Trace}(A))^{2}-\mathrm{Trace}(A^{2})).
\]
Thus, restricting to $U(2)$ and writing things in terms of the normalized
trace $\mathrm{tr}(\cdot),$ we have that%
\[
U^{2}-2\mathrm{tr}(U)U+2(\mathrm{tr}(U))^{2}I-\mathrm{tr}(U^{2})I=0
\]
for all $U\in U(2).$

We see, then, that if $p\in\mathbb{C}[u,\mathbf{v}]$ is given by%
\[
p(u,\mathbf{v})=u^{2}-2uv_{1}+2v_{1}^{2}-v_{2},
\]
then the function $p_{2}$ on $U(2)$ is identically zero. There is, however, no
reason that $p_{N}$ should be zero for $N>2.$ Indeed, we show in Section 2.4
of \cite{DHK} that for any $p\in\mathbb{C}[u,\mathbf{v}],$ if $p_{N}$ is
identically zero for \textit{all} $N,$ then $p$ must be the zero polynomial.

Although there is not a one-to-one correspondence between polynomials $p$ and
trace polynomial functions $p_{N},$ it turns out that there is a well-defined
linear operator $\mathcal{D}_{N}$ on $\mathbb{C}[u,\mathbf{v}]$ that
\textquotedblleft intertwines\textquotedblright\ with the action of
$\Delta_{N}$ on functions.

\begin{theorem}
\label{LN.thm}For each $N\geq1,$ there exists a linear operator $\mathcal{D}%
_{N}:\mathbb{C}[u,\mathbf{v}]\rightarrow\mathbb{C}[u,\mathbf{v}]$ such that
for all $p\in\mathbb{C}[u,\mathbf{v}],$ we have%
\[
\Delta_{N}(p_{N})=(\mathcal{D}_{N}p)_{N}.
\]
The operator $\mathcal{D}_{N}$ can be decomposed as%
\begin{equation}
\mathcal{D}_{N}=\mathcal{D}-\frac{1}{N^{2}}\mathcal{L}, \label{DNdecomp}%
\end{equation}
for two linear operators $\mathcal{D}$ and $\mathcal{L}$ mapping
$\mathbb{C}[u,\mathbf{v}]$ to itself.

The operator $\mathcal{D}$ is uniquely determined by the following properties.

\begin{enumerate}
\item \label{uk}$\mathcal{D}(u^{k})=-ku^{k}-2\sum_{m=1}^{k-1}mu^{m}v_{k-m}.$

\item \label{vk}$\mathcal{D}(v_{k})=-kv_{k}-2\sum_{m=1}^{k-1}mv_{m}v_{k-m}.$

\item \label{ppr}For all $p$ and $q$ in $\mathbb{C}[u,\mathbf{v}],$ if either
$p$ or $q$ is scalar, then%
\[
\mathcal{D}(pq)=\mathcal{D}(p)q+p\mathcal{D}(q).
\]

\end{enumerate}
\end{theorem}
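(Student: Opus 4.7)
The plan is to build $\mathcal{D}_N$ directly by turning the exact (not just asymptotic) product rule for $\Delta_N$ into an abstract algebraic operation on $\mathbb{C}[u,\mathbf{v}]$. Every $p \in \mathbb{C}[u,\mathbf{v}]$ has a unique representation as a finite $\mathbb{C}$-linear combination of monomials of the form $u^k \prod_i v_{l_i}^{a_i}$, so it suffices to prescribe $\mathcal{D}_N$ on such monomials and extend linearly. To do so, I would apply $\Delta_N = \sum_j \tilde{X}_j^2$ to the corresponding function on $U(N)$ and expand via the full Leibniz rule. This produces two kinds of contributions: terms in which $\Delta_N$ falls on a single factor, each already a trace polynomial by Proposition \ref{lapPower.prop}, together with cross terms $2\sum_j \tilde{X}_j(f)\tilde{X}_j(g)$ for each pair of distinct factors, which can in turn be rewritten as trace polynomials using the identities for sums over an orthonormal basis of $u(N)$ sketched in Section \ref{magic.sec}. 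Reading off the coefficients in $u$ and the $v_l$ defines $(\mathcal{D}_N p)$ on each monomial, and the intertwining $\Delta_N(p_N) = (\mathcal{D}_N p)_N$ holds by construction.

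Next I would produce $\mathcal{D}$ directly from the three required properties. The values on generators are fixed by (\ref{uk}) and (\ref{vk}); property (\ref{ppr}) with both factors scalar forces $\mathcal{D}$ to restrict to an ordinary derivation on the scalar subalgebra $\mathbb{C}[\mathbf{v}]$, uniquely extending the prescribed $\mathcal{D}(v_k)$. Any element of $\mathbb{C}[u,\mathbf{v}]$ can be written uniquely as $\sum_k u^k q_k$ with $q_k \in \mathbb{C}[\mathbf{v}]$, and applying (\ref{ppr}) with the scalar factor $q_k$ yields
\[
\mathcal{D}\Bigl(\sum_k u^k q_k\Bigr) = \sum_k \bigl(\mathcal{D}(u^k)\,q_k + u^k\,\mathcal{D}(q_k)\bigr),
\]
which simultaneously proves existence and uniqueness of $\mathcal{D}$. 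Defining $\mathcal{L} := N^2(\mathcal{D} - \mathcal{D}_N)$ then yields the decomposition (\ref{DNdecomp}), provided $\mathcal{L}$ is a well-defined linear operator on $\mathbb{C}[u,\mathbf{v}]$ whose coefficients are independent of $N$.

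The main obstacle is exactly this last $N$-independence claim. Comparing $\mathcal{D}$ with the construction of $\mathcal{D}_N$ from the first step, the difference $\mathcal{D} - \mathcal{D}_N$ is precisely the total contribution of the cross terms $2\sum_j \tilde{X}_j(f)\tilde{X}_j(g)$ between distinct factors in a monomial. The task is to verify that each atomic cross term---for the two elementary pairs $(f,g) = (U^k, \mathrm{tr}(U^l))$ and $(f,g) = (\mathrm{tr}(U^k), \mathrm{tr}(U^l))$---equals $\tfrac{1}{N^2}$ times a trace polynomial whose coefficients do not involve $N$. This is the crux of the matter: the two factors of $1/N$ arise from the $N$-rescaling in (\ref{unInner}) combined with the $1/N$ inside the normalized trace, and the direct computation relies on standard identities for $\sum_j X_j A X_j$ and $\sum_j \mathrm{Trace}(X_j A) X_j$ with $\{X_j\}$ an orthonormal basis of $u(N)$ for the normalized inner product. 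Once these atomic formulas are in hand with $N$-independent right-hand sides, the same iterative Leibniz expansion used to define $\mathcal{D}_N$ propagates the $1/N^2$ behavior to every monomial, yielding the required $N$-independent $\mathcal{L}$ and completing the decomposition.
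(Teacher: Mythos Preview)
Your proposal is correct and follows essentially the same approach as the paper: compute $\Delta_N$ on trace monomials via the Leibniz rule and the magic formulas of Theorem~\ref{magic.thm}, identify the derivation-like part (Term~I of Proposition~\ref{lapPoly.prop}) with $\mathcal{D}$, and recognize that the cross terms (Terms~II and~III) all carry an explicit $1/N^2$ from (\ref{magic3}) and (\ref{magic4}), yielding an $N$-independent $\mathcal{L}$. The only cosmetic difference is that you first construct $\mathcal{D}$ abstractly from properties (\ref{uk})--(\ref{ppr}) and then verify $N^2(\mathcal{D}-\mathcal{D}_N)$ is $N$-independent, whereas the paper reads off both $\mathcal{D}$ and $\mathcal{L}$ directly from the explicit three-term decomposition in Proposition~\ref{lapPoly.prop}.
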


This result follows from Theorem 1.18 in \cite{DHK}. Recall that the variable
$u$ is a stand-in for the variable $U$ in a trace polynomial, whereas the
variable $v_{k}$ is a stand-in for $\mathrm{tr}(U^{k}).$ Thus, Points \ref{uk}
and \ref{vk} are simply the polynomial counterparts to Proposition
\ref{lapPower.prop}. Point \ref{ppr}, meanwhile, is simply the polynomial
counterpart to the asymptotic product rule in Proposition \ref{product.prop}.

\begin{proposition}
\label{diffOps.prop}Suppose $p\in\mathbb{C}[u,\mathbf{v}]$ is scalar, that is,
independent of $u.$ Then the action of $\mathcal{D}$ on $p$ is given by%
\[
\mathcal{D}p=-\sum_{k=1}^{\infty}kv_{k}\frac{\partial p}{\partial v_{k}}%
-2\sum_{k=2}^{\infty}\left(  \sum_{j=1}^{k-1}jv_{j}v_{k-j}\right)
\frac{\partial p}{\partial v_{k}}%
\]
and the action of $\mathcal{L}$ on $p$ is given by%
\[
\mathcal{L}p=\sum_{j,k=1}^{\infty}jkv_{k+j}\frac{\partial^{2}p}{\partial
v_{j}\partial v_{k}}.
\]

\end{proposition}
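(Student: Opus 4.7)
The plan is to derive both formulas by reducing the action of $\mathcal{D}$ and $\mathcal{L}$ on an arbitrary scalar polynomial to their actions on the generators $v_k$, then extending through the appropriate product rules.

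First, the formula for $\mathcal{D}$. Property \ref{ppr} of Theorem \ref{LN.thm} implies that $\mathcal{D}$, restricted to the subring of scalar polynomials $\mathbb{C}[\mathbf{v}] \subset \mathbb{C}[u,\mathbf{v}]$, is a derivation: if $p$ and $q$ are both scalar, then either is scalar, so Property \ref{ppr} gives $\mathcal{D}(pq) = \mathcal{D}(p)q + p\mathcal{D}(q)$. Since any derivation of a polynomial ring over $\mathbb{C}$ is determined by its values on the generators, for every $p \in \mathbb{C}[\mathbf{v}]$
\[
\mathcal{D}p = \sum_{k=1}^{\infty} \mathcal{D}(v_k)\,\frac{\partial p}{\partial v_k}
\]
(a finite sum since $p$ depends on only finitely many $v_k$). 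Substituting Property \ref{vk}, $\mathcal{D}(v_k) = -kv_k - 2\sum_{m=1}^{k-1} m v_m v_{k-m}$, and separating the two resulting summands yields the stated formula (the second sum may be started at $k=2$ since its $k=1$ term is empty).

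For $\mathcal{L}$, the decomposition $\mathcal{D}_N = \mathcal{D} - \frac{1}{N^2}\mathcal{L}$, combined with the fact that $\mathcal{D}_N$ and $\mathcal{D}$ agree on each generator $v_k$ (both reproduce Proposition \ref{lapPower.prop}), forces $\mathcal{L}(v_k) = 0$ for every $k$. Hence $\mathcal{L}$ captures only the failure of $\mathcal{D}_N$ to be a derivation, which is the carr\'{e} du champ of the Laplacian: for smooth $f, g$ on $U(N)$,
\[
\Delta_N(fg) = \Delta_N(f)g + f\Delta_N(g) + 2\Gamma_N(f,g), \qquad \Gamma_N(f,g) := \sum_j \tilde{X}_j(f)\,\tilde{X}_j(g).
\]
Since $\tilde{X}_j\mathrm{tr}(U^l) = l\,\mathrm{tr}(U^l X_j)$ by cyclic invariance of the trace, the essential step is the ``magic formula'' identity
\[
\sum_j \mathrm{tr}(AX_j)\,\mathrm{tr}(BX_j) = -\frac{1}{N^2}\mathrm{tr}(AB), \qquad A, B \in M_N(\mathbb{C}),
\]
valid for any orthonormal basis $\{X_j\}$ of $u(N)$ with respect to the inner product (\ref{unInner}). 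Taking $A = U^l$ and $B = U^m$ gives $\Gamma_N(\mathrm{tr}(U^l), \mathrm{tr}(U^m)) = -\frac{lm}{N^2}\mathrm{tr}(U^{l+m})$, which on the polynomial side reads $\mathcal{L}(v_lv_m) = 2lm\,v_{l+m}$.

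To extend to a general scalar monomial $v_{k_1}\cdots v_{k_n}$, I will iterate the Leibniz rule for $\Delta_N$. The carr\'{e} du champ $\Gamma_N$ is itself a derivation in each argument (being a sum of products of first-order operators), so the pairwise contributions aggregate:
\[
\Delta_N(v_{k_1}\cdots v_{k_n}) = \sum_i \Delta_N(v_{k_i})\prod_{l\neq i} v_{k_l} - \frac{1}{N^2}\sum_{i\neq j} k_ik_j\,v_{k_i+k_j}\prod_{l\neq i,j} v_{k_l}.
\]
Comparing this with the action of the derivation $\mathcal{D}$ on the same monomial, the definition of $\mathcal{L}$ gives $\mathcal{L}(v_{k_1}\cdots v_{k_n}) = \sum_{i\neq j} k_ik_j\,v_{k_i+k_j}\prod_{l\neq i,j} v_{k_l}$, which a direct unpacking shows equals $\sum_{j,k} jk\,v_{j+k}\,\partial^2/(\partial v_j \partial v_k)$ applied to the monomial; linearity then extends the formula to all scalar $p$. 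The main technical hurdle in the argument is the magic-formula computation yielding the identity $\sum_j \mathrm{tr}(AX_j)\mathrm{tr}(BX_j) = -\frac{1}{N^2}\mathrm{tr}(AB)$, which requires working concretely with the scaled inner product (\ref{unInner}) and a specific orthonormal basis of $u(N)$; once in hand, the rest is combinatorial bookkeeping with the Leibniz rule.
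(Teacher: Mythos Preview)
Your argument is correct and follows essentially the route the paper indicates. The survey itself does not spell out a proof of this proposition---it cites Theorem 1.18 of \cite{DHK} and remarks, after Proposition \ref{lapPoly.prop}, that ``from Propositions \ref{lapPower.prop} and \ref{lapPoly.prop}, we can easily obtain the operators $\mathcal{D}$ and $\mathcal{L}$.'' Your treatment of $\mathcal{D}$ via the derivation property is exactly right, and your computation of $\mathcal{L}$ via the carr\'{e} du champ and the magic formula (\ref{magic4}) is precisely the scalar ($k=0$) case of Proposition \ref{lapPoly.prop}, with Term III absent and Term II giving the second-order operator.

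One small point worth tightening: your displayed identity for $\Delta_N(v_{k_1}\cdots v_{k_n})$ is established at the level of \emph{functions} on $U(N)$, whereas $\mathcal{L}$ is defined at the level of abstract polynomials in $\mathbb{C}[u,\mathbf{v}]$. To conclude that your candidate second-order operator agrees with $\mathcal{L}$ as polynomial operators, you should invoke the fact (Section 2.4 of \cite{DHK}, mentioned in Section \ref{poly.sec}) that if $q_N=0$ for all $N$ then $q=0$; since your function-level identity holds for every $N$, the difference between $\mathcal{L}p$ and your formula is a fixed polynomial whose evaluation vanishes for all $N$, hence is zero. This is a routine step, but worth making explicit given the paper's own emphasis on the distinction between abstract polynomials and trace-polynomial functions.
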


This result follows, again, from Theorem 1.18 in \cite{DHK}. Note that the
actions of $\mathcal{D}$ and $\mathcal{L}$ on scalar polynomials are described
by \textit{differential} operators, and that the leading-order term
$\mathcal{D}$ acts as a \textit{first-order} differential operator. Since the
scalar polynomial $p$ depends on only finitely many of the variables $v_{j},$
only finitely many of the terms in each sum is nonzero. There is also a
formula in that theorem for the action of $\mathcal{D}$ and $\mathcal{L}$ on
nonscalar polynomials (i.e., those polynomials $p(u,\mathbf{v})$ that depend
nontrivially on $u$). The \textquotedblleft full\textquotedblright\ operators
$\mathcal{D}$ and $\mathcal{L}$ are not, however, differential operators. See
Theorem 1.18 in \cite{DHK} for the exact expression.

We may now express the asymptotic product rule more precisely as follows.

\begin{proposition}
[Asymptotic product rule, Version 2]Suppose $p$ and $q$ are polynomials in
$\mathbb{C}[u,\mathbf{v}]$ and that $p $ is scalar (Definition
\ref{scalar.def}). Then there exists a polynomial $r$ such that%
\[
\Delta_{N}(p_{N}q_{N})=\Delta_{N}(p_{N})q_{N}+p_{N}\Delta_{N}(q_{N})+\frac
{1}{N^{2}}r_{N}.
\]

\end{proposition}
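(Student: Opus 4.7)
The plan is to recognize that the claim follows formally from Theorem \ref{LN.thm} by combining the decomposition $\mathcal{D}_N = \mathcal{D} - \frac{1}{N^2}\mathcal{L}$ with the exact Leibniz rule that $\mathcal{D}$ obeys when one factor is scalar. First I would note that the evaluation map $p \mapsto p_N$ is a ring homomorphism from $\mathbb{C}[u,\mathbf{v}]$ into the algebra of $M_N(\mathbb{C})$-valued functions on $U(N)$, since substituting $u = U$ and $v_k = \mathrm{tr}(U^k)$ sends each $v_k$ to a scalar that commutes with $U$. In particular, $(pq)_N = p_N q_N$, and the intertwining relation $\Delta_N(f_N) = (\mathcal{D}_N f)_N$ from Theorem \ref{LN.thm} reduces the claim to producing an $r \in \mathbb{C}[u,\mathbf{v}]$ with
\[
\mathcal{D}_N(pq) = \mathcal{D}_N(p)\, q + p\, \mathcal{D}_N(q) + \frac{1}{N^2}\, r.
\]

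To construct such an $r$, I would expand $\mathcal{D}_N(pq) = \mathcal{D}(pq) - \frac{1}{N^2}\mathcal{L}(pq)$ and invoke property (\ref{ppr}) of Theorem \ref{LN.thm}, which, since $p$ is scalar, gives the exact Leibniz identity $\mathcal{D}(pq) = \mathcal{D}(p)\, q + p\, \mathcal{D}(q)$. Substituting $\mathcal{D} = \mathcal{D}_N + \frac{1}{N^2}\mathcal{L}$ on the right-hand side and collecting the terms of order $1/N^2$ yields
\[
\mathcal{D}_N(pq) = \mathcal{D}_N(p)\, q + p\, \mathcal{D}_N(q) + \frac{1}{N^2}\,\bigl[\mathcal{L}(p)\, q + p\, \mathcal{L}(q) - \mathcal{L}(pq)\bigr].
\]
Because $\mathcal{L}$ maps $\mathbb{C}[u,\mathbf{v}]$ into itself, the bracketed quantity is a legitimate element of $\mathbb{C}[u,\mathbf{v}]$, which I would take as the polynomial $r$. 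Applying the homomorphism $(\cdot)_N$ to the identity above and using the intertwining relation on each term then delivers the stated formula with error $\frac{1}{N^2} r_N$.

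There is no real obstacle to this argument: Theorem \ref{LN.thm} has absorbed all of the substantive content, in particular the nontrivial fact that the leading-order operator $\mathcal{D}$ satisfies a genuine product rule on products in which one factor is scalar. The role of the present proposition is merely to make explicit that the only defect in the Leibniz rule for $\mathcal{D}_N$ comes from the correction operator $\mathcal{L}$, and that this defect carries the explicit prefactor $1/N^2$; moreover the error polynomial $r$ is given by a closed-form expression in $\mathcal{L}(p)$, $\mathcal{L}(q)$, and $\mathcal{L}(pq)$, and in particular is independent of $N$.
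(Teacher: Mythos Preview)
Your argument is correct and is essentially the same one-line application of Theorem~\ref{LN.thm} that the paper uses: intertwine $\Delta_N$ with $\mathcal{D}_N=\mathcal{D}-\tfrac{1}{N^2}\mathcal{L}$ and invoke the exact Leibniz rule for $\mathcal{D}$ when one factor is scalar. In fact your computation is more careful than the paper's: the paper records $r=-\mathcal{L}(pq)$, whereas your formula $r=\mathcal{L}(p)\,q+p\,\mathcal{L}(q)-\mathcal{L}(pq)$ is the one that actually follows from substituting $\mathcal{D}=\mathcal{D}_N+\tfrac{1}{N^2}\mathcal{L}$ back into $\mathcal{D}(p)q+p\,\mathcal{D}(q)$; the paper's stated $r$ omits the two cross terms but the existence claim and the method are identical.
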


\begin{proof}
Apply Theorem \ref{LN.thm} and set $r=-\mathcal{L}(pq).$
\end{proof}

\section{The product rule and concentration of traces}

Recall that a key idea underlying Theorem \ref{mainIntro.thm} is the
phenomenon of \textit{concentration of trace}. Concentration of trace means
that both of the relevant heat kernel measures, $\rho_{t}^{N}$ on $U(N)$ and
$\mu_{t}^{N}$ on $GL(N;\mathbb{C}),$ concentrate in the large-$N$ limit on the
set where the trace of a power is constant. Thus, the function $\mathrm{tr}%
(U^{k})$, as an element of $L^{2}(U(N),\rho_{t}^{N}),$ becomes equal to a
certain constant $\nu_{k}(t)$ in the limit, and similarly for the function
$\mathrm{tr}(Z^{k})$ in $\mathcal{H}L^{2}(GL(N;\mathbb{C}),\mu_{t}^{N}).$ In
this section, we trace the origin of the concentration-of-trace phenomenon to
the asymptotic product rule.

On, say, the $U(N)$ side, the measure $\rho_{t}^{N}$ is the heat kernel
measure at the identity, which means that%
\begin{equation}
\int_{U(N)}f(U)\rho_{t}^{N}(U)~dU=e^{t\Delta/2}(f)(I). \label{intRho}%
\end{equation}
Suppose now that $f$ belongs to some algebra of real-valued functions to which
the asymptotic product rule applies. (For example, $f$ might be the real or
imaginary part of $\mathrm{tr}(U^{k}).$) Then applying the exponentiated form
(\ref{expProduct}) of the product rule with $f=g,$ we obtain%
\begin{equation}
e^{t\Delta_{N}/2}(f^{2})=e^{t\Delta_{N}/2}(f)e^{t\Delta_{N}/2}(f)+O(1/N^{2}).
\label{var1}%
\end{equation}

In light of (\ref{intRho}), (\ref{var1}) reduces to%
\[
\int_{U(N)}f^{2}~d\rho_{t}^{N}=\left(  \int_{U(N)}f~d\rho_{t}^{N}\right)
^{2}+O(1/N^{2}).
\]
In probabilistic language, this says that%
\begin{equation}
E(f^{2})=(E(f))^{2}+O(1/N^{2}), \label{Ef2}%
\end{equation}
where $E$ denotes expectation value with respect to the measure $\rho_{t}%
^{N}.$

Recall that the \textit{variance} of $f$ is defined as
\[
\mathrm{Var}(f):=E((f-E(f))^{2}),
\]
and may be computed as $\mathrm{Var}(f)=E(f^{2})-(E(f))^{2}.$ Thus,
(\ref{Ef2}) is telling us that%
\[
\mathrm{Var}(f)=O(1/N^{2}).
\]
Thus, when $N$ is large, $f(U)$ is close to the constant value $E(f)$ for most
values of $U.$

\begin{conclusion}
\label{concentrate.conclusion}Suppose $f$ belongs to some algebra of
real-valued functions on $U(N)$ for which the asymptotic product rule applies.
Then the variance of $f$ with respect to the heat kernel measure $\rho_{t}%
^{N}$ is small for large $N.$
\end{conclusion}

We may apply Conclusion \ref{concentrate.conclusion} with $f$ being the real
or imaginary part of $\mathrm{tr}(U^{k}).$ We conclude that $\mathrm{tr}%
(U^{k})$---as an element of $L^{2}(U(N),\rho_{t}^{N})$---is concentrating onto
its expectation value for large $N.$ A similar argument shows that
$\mathrm{tr}(Z^{k})$---as an element of $\mathcal{H}L^{2}(GL(N;\mathbb{C}%
),\mu_{t}^{N})$---is concentrating onto the value 1 for large $N.$ In
\cite{DHK}, we prove the following more general result.

\begin{theorem}
[Concentration of Traces]\label{concentrate.thm}For any polynomial
$p\in\mathbb{C}[u,\mathbf{v}],$ let $\pi_{t}:\mathbb{C}[u,\mathbf{v}%
]\rightarrow\mathbb{C}[u]$ be the \textbf{trace evaluation map} obtained by
setting each of the variables $v_{j}$ equal to the constant value $\nu
_{k}(t),$ where $\nu_{k}(t)$ is the $k$th moment of Biane's measure $\nu_{t}$
on $S^{1}.$ That is,%
\[
(\pi_{t}p)(u)=p(u,\nu_{1}(t),\nu_{2}(t),\ldots).
\]
Since $\nu_{k}(0)=1,$ the map $\pi_{0}$ corresponds to evaluating each of the
variables $v_{j}$ to the value 1. Then we have the following results:%
\begin{align*}
\lim_{N\rightarrow\infty}\left\Vert p_{N}-(\pi_{t}p)_{N}\right\Vert
_{L^{2}(U(N),\rho_{t}^{N};M_{N}(\mathbb{C}))}  &  =0\\
\lim_{N\rightarrow\infty}\left\Vert p_{N}-(\pi_{0}p)_{N}\right\Vert
_{L^{2}(GL(N;\mathbb{C}),\mu_{t}^{N};M_{N}(\mathbb{C}))}  &  =0,
\end{align*}
where the notation $p_{N}$ is as in Definition \ref{eval.def}.
\end{theorem}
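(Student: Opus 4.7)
The plan is to reduce the theorem to concentration of a single trace $\mathrm{tr}(U^k)$ (respectively $\mathrm{tr}(Z^k)$), and then extract that concentration from the variance bound implied by the exponentiated asymptotic product rule (\ref{expProduct}) combined with an exact computation of means via the polynomial Laplacian $\mathcal{D}_N$ of Theorem \ref{LN.thm}.

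\textbf{Reduction and variance/bias split on $U(N)$.} Writing $p(u,\mathbf{v}) = \sum_\alpha u^{k_\alpha} P_\alpha(\mathbf{v})$ with $P_\alpha$ scalar and using $\mathrm{tr}((U^{k_\alpha})^* U^{k_\alpha}) = 1$ on $U(N)$, the triangle inequality reduces matters to $\|P_N - (\pi_t P)_N\|_{L^2(\rho_t^N)} \to 0$ for every scalar $P$. Expanding $P$ in monomials $v_{k_1}\cdots v_{k_n}$, telescoping
\[
\prod_i \mathrm{tr}(U^{k_i}) - \prod_i \nu_{k_i}(t) = \sum_j \Bigl(\prod_{i<j}\mathrm{tr}(U^{k_i})\Bigr)\bigl(\mathrm{tr}(U^{k_j}) - \nu_{k_j}(t)\bigr)\Bigl(\prod_{i>j}\nu_{k_i}(t)\Bigr),
\]
and invoking the pointwise bounds $|\mathrm{tr}(U^k)| \le 1$ and $|\nu_k(t)| \le 1$, the problem reduces to showing $\|\mathrm{tr}(U^k) - \nu_k(t)\|_{L^2(\rho_t^N)} \to 0$ for each $k$. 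Splitting as $\mathrm{Var}(\mathrm{tr}(U^k)) + |E[\mathrm{tr}(U^k)] - \nu_k(t)|^2$, Conclusion \ref{concentrate.conclusion} applied to the real and imaginary parts of $\mathrm{tr}(U^k)$ yields $\mathrm{Var} = O(1/N^2)$. For the bias, Theorem \ref{LN.thm} gives $E_{\rho_t^N}[\mathrm{tr}(U^k)] = e^{t\mathcal{D}_N/2}(v_k)\big|_{\mathbf{v}=\mathbf{1}}$, and from $\mathcal{D}_N = \mathcal{D} - \tfrac{1}{N^2}\mathcal{L}$ combined with a Duhamel estimate this converges to $e^{t\mathcal{D}/2}(v_k)|_{\mathbf{v}=\mathbf{1}}$, which one takes as the definition of $\nu_k(t)$ (matching Biane's moments by \cite{Biane1}).

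\textbf{Transfer to $GL(N;\mathbb{C})$ via Segal--Bargmann.} The isometry $\|F - c\|_{\mu_t^N} = \|B_t^{-1}F - c\|_{\rho_t^N}$ for any constant $c$ reduces the $GL$ statement to a $U(N)$ statement: for scalar $P$, Theorem \ref{LN.thm} applied with $-t$ yields $B_t^{-1}P_N = (e^{-t\mathcal{D}_N/2}P)_N$ on $U(N)$, so the $U(N)$ concentration proved above reduces everything to the identity $\pi_t\bigl(e^{-t\mathcal{D}/2}P\bigr) = \pi_0(P)$. By Proposition \ref{diffOps.prop}, $\mathcal{D}/2$ restricted to scalars is a first-order vector field, and the method of characteristics gives $(e^{s\mathcal{D}/2}P)(\mathbf{v}) = P(\Phi_s\mathbf{v})$ for its flow $\Phi_s$. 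Since $\nu(t) = \Phi_t(\mathbf{1})$ by the very definition of $\nu_k(t)$ used above, one obtains $\pi_t(e^{-t\mathcal{D}/2}P) = P(\Phi_{-t}\Phi_t\mathbf{1}) = P(\mathbf{1}) = \pi_0(P)$, as required.

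\textbf{Main obstacle.} The delicate point is the telescoping on $GL(N;\mathbb{C})$: the pointwise bound $|\mathrm{tr}(\cdot)^k| \le 1$ that trivialized the $U(N)$ reduction simply fails, and to bound the tail factors $\prod_{i<j}\mathrm{tr}(Z^{k_i})$ one needs uniform-in-$N$ control of $L^p(\mu_t^N)$ norms of $\mathrm{tr}(Z^k)$ for $p > 2$, which the Segal--Bargmann isometry alone does not supply. These bounds must be bootstrapped by applying the $L^2$-concentration statement to the scalar trace polynomials $\mathrm{tr}(Z^{k_1})\cdots\mathrm{tr}(Z^{k_m})$, transferring each to $U(N)$ via $B_t^{-1}$ where $|\mathrm{tr}(U^{k_i})| \le 1$ gives an $L^\infty$ bound, and iterating in $m$. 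This recursive interplay between $L^p$ control and $L^2$ concentration, driven throughout by the asymptotic product rule, is the step requiring the most care.
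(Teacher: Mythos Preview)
On $U(N)$ your argument matches the paper's: reduce to a single trace, bound the variance via the exponentiated product rule, and control the bias by $\mathcal{D}_N\to\mathcal{D}$. On $GL(N;\mathbb{C})$ you take a genuinely different route. The paper (through \cite{DHK}) argues directly on $GL(N;\mathbb{C})$: the measure $\mu_t^N$ is itself a heat-kernel measure for a Laplacian on $GL(N;\mathbb{C})$, that Laplacian has its own asymptotic product rule on trace polynomials in $Z$ and $Z^*$, and the variance computation of Conclusion~\ref{concentrate.conclusion} carries over verbatim. Your idea of pulling the $GL$ problem back to $U(N)$ through the isometry $\mathbf{B}_t^N$ and invoking the flow identity $\pi_t(e^{-t\mathcal{D}/2}P)=\pi_0(P)$ is a legitimate alternative, with the advantage that it uses only the $U(N)$ product rule.

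Your ``main obstacle,'' however, is both incompletely resolved and unnecessary. The bootstrap you sketch gives uniform $L^{2m}(\mu_t^N)$ control of $\mathrm{tr}(Z^k)$ (since $\|(\mathrm{tr}(Z^k))^m\|_{L^2}=\|\mathrm{tr}(Z^k)\|_{L^{2m}}^m$ and the left side is holomorphic, hence transferable), but reducing $p=\sum_\alpha u^{k_\alpha}P_\alpha$ to the scalar $P_\alpha$ on the $GL$ side forces a H\"older bound on the non-holomorphic quantity $\mathrm{tr}\bigl((Z^{k_\alpha})^*Z^{k_\alpha}\bigr)$, which your $\mathbf{B}_t^{-1}$ machinery cannot reach. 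The obstacle disappears if you transfer the full matrix-valued difference at once: for $q:=p-\pi_0 p$ one has $\|q_N\|_{L^2(\mu_t^N;M_N)}=\|(e^{-t\mathcal{D}_N/2}q)_N\|_{L^2(\rho_t^N;M_N)}$, and the right side lives on $U(N)$ where every trace polynomial is bounded. After replacing $\mathcal{D}_N$ by $\mathcal{D}$ (cost $O(1/N^2)$) and applying the $U(N)$ concentration already established, one only needs $\pi_t(e^{-t\mathcal{D}/2}q)=0$. This holds for \emph{every} $q\in\ker\pi_0$, not only scalar ones: write $q=\sum_j(v_j-1)r_j$, note that Point~\ref{ppr} of Theorem~\ref{LN.thm} is an exact identity for $\mathcal{D}$ and iterates because $\mathcal{D}$ preserves scalar polynomials, so $e^{-t\mathcal{D}/2}\bigl((v_j-1)r_j\bigr)=e^{-t\mathcal{D}/2}(v_j-1)\cdot e^{-t\mathcal{D}/2}(r_j)$; then your scalar flow identity kills the first factor under $\pi_t$, and $\pi_t$ is a ring homomorphism. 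With this observation the bootstrapping paragraph can be dropped entirely.
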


This is the $s=t$ case of Theorem 1.16 in \cite{DHK}. We have seen in this
section that the phenomenon of concentration of trace can be understood as a
consequence of the asymptotic product rule. In the next section, we will use
the asymptotic product rule to compute---to leading order in $N$---the value
of $e^{t\Delta_{N}/2}(U^{k}).$

\section{A recursive approach to the Segal--Bargmann transform on
polynomials\label{induct.sec}}

The operator $\mathcal{D}$ in Theorem \ref{LN.thm} describes the leading-order
behavior of $\Delta_{N}$ on trace polynomials (see (\ref{DNdecomp})). Thus,
$e^{t\mathcal{D}/2}$ describes the leading-order behavior of the
Segal--Bargmann transform $\mathbf{B}_{t}^{N}$ on trace polynomials. In this
section (following Section 5.1 of \cite{DHK}), we construct a recursive method
of computing $e^{t\mathcal{D}/2}(u^{k})$ for positive integers $k.$ Since
$\mathcal{D}(u)=-u,$ our base case is $e^{t\mathcal{D}/2}(u)=e^{-t/2}u.$ The
induction step will use the product rule for $\mathcal{D}$ (Point \ref{ppr} of
Theorem \ref{LN.thm}) in an essential way.

Given a monomial $q$ in $\mathbb{C}[u,\mathbf{v}],$ say%
\[
q(u,\mathbf{v})=u^{l_{0}}v_{1}^{l_{1}}\cdots v_{M}^{l_{M}},
\]
for some $M,$ we define the \textbf{trace degree} of $q$ to be%
\[
\mathrm{\deg}(q)=l_{0}+l_{1}+2l_{2}+\cdots+Ml_{M}.
\]
This definition reflects the idea that $v_{k}$ is a stand-in for the function
$\mathrm{tr}(U^{k})$ on $U(N).$ Thus, the trace degree of $q$ is the total
number of factors of $U$ in the associated trace polynomial $q_{N}(U).$ (Thus,
for example, $q(u,\mathbf{v}):=u^{2}v_{2}^{2}$ has trace degree 6 because the
associated trace polynomial $q_{N}(U)=U^{2}(\mathrm{tr}(U^{2}))^{2}$ has six
factors of $U.$) We say that a polynomial $p\in\mathbb{C}[u,\mathbf{v}]$ is
homogeneous of trace degree $k$ if $p$ is a linear combination of monomials
having trace degree $k.$

Let $\mathbb{C}^{(k)}[u,\mathbf{v]}$ denote the space of $p\in\mathbb{C}%
[u,\mathbf{v}]$ that are homogeneous of trace degree $k,$ so that
$\mathbb{C}[u,\mathbf{v}]$ is the direct sum of the $\mathbb{C}^{(k)}%
[u,\mathbf{v}]$'s, for $k=0,1,2,\ldots.$ Each space $\mathbb{C}^{(k)}%
[u,\mathbf{v]}$ is easily seen to be finite dimensional, and is invariant
under the operators $\mathcal{D}$ and $\mathcal{L}$ in Theorem \ref{LN.thm}.
Thus, it makes sense to exponentiate any linear combination of these operators
by thinking of them as operators on each of the finite-dimensional spaces
$\mathbb{C}^{(k)}[u,\mathbf{v}].$

Let $N$ be the \textquotedblleft number operator\textquotedblright\ on
$\mathbb{C}[u,\mathbf{v}],$ namely, the operator such that%
\[
\left.  N\right\vert _{\mathbb{C}^{(k)}[u,\mathbf{v}]}=kI.
\]
It is convenient to decompose $\mathcal{D}$ as%
\begin{equation}
\mathcal{D}=-N+\mathcal{\tilde{D}}. \label{LinfDecomp}%
\end{equation}
Since the polynomials $p(u,\mathbf{v}):=u^{k}$ and $q(u,\mathbf{v}):=v_{k}$
both belong to $\mathbb{C}^{(k)}[u,\mathbf{v}],$ if we with to compute
$\mathcal{\tilde{D}}(u^{k})$ or $\mathcal{\tilde{D}}(v_{k}),$ we simply omit
the term of $-ku^{k}$ or $-kv_{k}$ in front of the sums in Points \ref{uk} and
\ref{vk} of Theorem \ref{LN.thm}.

The two terms on the right-hand side of (\ref{LinfDecomp}) commute, since they
commute on $\mathbb{C}^{(k)}[u,\mathbf{v}]$ for each $k.$ Thus,%
\[
e^{t\mathcal{D}/2}=e^{t\mathcal{\tilde{D}}/2}e^{-tN/2}.
\]
In particular,%
\begin{equation}
e^{t\mathcal{D}/2}(u^{k})=e^{-tk/2}e^{t\mathcal{\tilde{D}}/2}(u^{k}).
\label{expLD}%
\end{equation}
Now,%
\begin{align*}
\frac{d}{dt}e^{t\mathcal{\tilde{D}}/2}(u^{k})  &  =\frac{1}{2}%
e^{t\mathcal{\tilde{D}}/2}(\mathcal{\tilde{D}}u^{k})\\
&  =\frac{1}{2}e^{t\mathcal{\tilde{D}}/2}\left(  -2\sum_{m=1}^{k-1}%
mu^{m}v_{k-m}\right)  ,
\end{align*}
by Point \ref{uk} of Theorem \ref{LN.thm}.

Since the polynomial $q(u,\mathbf{v})=v_{k-m}$ is scalar, the product rule
applies to the product $u^{m}v_{k-m}$. Since, also, $\mathcal{\tilde{D}}%
^{n}(v_{k-m})$ is scalar for all $n,$ we may apply a standard power series
argument to show that $e^{t\mathcal{\tilde{D}}/2}$ behaves multiplicatively on
the product $u^{m}v_{k-m}.$ Thus,
\begin{equation}
\frac{d}{dt}e^{t\mathcal{\tilde{D}}/2}(u^{k})=-\sum_{m=1}^{k-1}%
me^{t\mathcal{\tilde{D}}/2}(u^{m})e^{t\mathcal{\tilde{D}}/2}(v_{k-m}).
\label{diff1}%
\end{equation}
A similar argument shows that
\begin{equation}
\frac{d}{dt}e^{t\mathcal{\tilde{D}}/2}(v_{k})=-\sum_{m=1}^{k-1}%
me^{t\mathcal{\tilde{D}}/2}(v_{m})e^{t\mathcal{\tilde{D}}/2}(v_{k-m}).
\label{diff2}%
\end{equation}

We may then integrate either of equations (\ref{diff1}) or (\ref{diff2}), with
initial condition determined by the fact that $e^{t\mathcal{\tilde{D}}/2}=I$
when $t=0.$ This gives the following result.

\begin{theorem}
\label{induct.thm}For all positive integers $k,$ we have the recursive
formulas%
\begin{align}
e^{t\mathcal{\tilde{D}}/2}(u^{k}) &  =u^{k}-\sum_{m=1}^{k-1}m\int_{0}%
^{t}e^{s\mathcal{\tilde{D}}/2}(u^{m})e^{s\mathcal{\tilde{D}}/2}(v_{k-m}%
)~ds\nonumber\\
e^{t\mathcal{\tilde{D}}/2}(v_{k}) &  =v_{k}-\sum_{m=1}^{k-1}m\int_{0}%
^{t}e^{s\mathcal{\tilde{D}}/2}(v_{m})e^{s\mathcal{\tilde{D}}/2}(v_{k-m}%
)~ds.\label{induct}%
\end{align}

\end{theorem}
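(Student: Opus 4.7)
The plan is to derive both recursions as the integrated form of the coupled ODEs (\ref{diff1}) and (\ref{diff2}) already displayed in the text, with emphasis on the step in their derivation that the text dispatches as a \textquotedblleft standard power series argument.\textquotedblright{} The inputs are all on hand: the explicit actions $\tilde{\mathcal{D}}(u^k) = -2 \sum_{m=1}^{k-1} m\, u^m v_{k-m}$ and $\tilde{\mathcal{D}}(v_k) = -2 \sum_{m=1}^{k-1} m\, v_m v_{k-m}$, read off from Points \ref{uk} and \ref{vk} of Theorem \ref{LN.thm} by discarding the $-ku^k$ and $-kv_k$ pieces absorbed into $N$; the Leibniz property of $\mathcal{D}$ on products with a scalar factor (Point \ref{ppr}); and the fact that $e^{t\tilde{\mathcal{D}}/2}$ is a well-defined, smoothly $t$-dependent operator because $\tilde{\mathcal{D}}$ preserves each finite-dimensional graded piece $\mathbb{C}^{(k)}[u,\mathbf{v}]$.

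First I differentiate in $t$ and insert the formula for $\tilde{\mathcal{D}}(u^k)$ to obtain
\[
\frac{d}{dt} e^{t\tilde{\mathcal{D}}/2}(u^k) = -\sum_{m=1}^{k-1} m \, e^{t\tilde{\mathcal{D}}/2}(u^m v_{k-m}),
\]
and the analogous identity for $v_k$. The heart of the proof is the multiplicativity
\[
e^{t\tilde{\mathcal{D}}/2}(u^m v_{k-m}) = e^{t\tilde{\mathcal{D}}/2}(u^m) \cdot e^{t\tilde{\mathcal{D}}/2}(v_{k-m}),
\]
which converts the right-hand side into the form of (\ref{diff1}); the same reasoning delivers (\ref{diff2}), and integrating from $0$ to $t$ with the initial condition $e^{0} = I$ yields the two recursive formulas in the theorem.

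The step that requires real justification is the multiplicativity claim, and this is where the scalar hypothesis is used. I would first observe that $\tilde{\mathcal{D}}$, like $\mathcal{D}$, satisfies the Leibniz rule on any product in which at least one factor is scalar. Indeed, the number operator $N$ is a derivation of $\mathbb{C}[u,\mathbf{v}]$ without any scalar hypothesis, because trace degrees are additive on products of monomials, so $N(pq) = N(p)q + pN(q)$ holds on all of $\mathbb{C}[u,\mathbf{v}]$; combining this with Point \ref{ppr} shows that $\tilde{\mathcal{D}} = \mathcal{D} + N$ inherits the Leibniz property under the same scalar hypothesis. Next, since $\tilde{\mathcal{D}}$ visibly carries scalar polynomials to scalar polynomials (the right-hand side of Point \ref{vk} is scalar, and $N$ preserves scalarity), each iterate $\tilde{\mathcal{D}}^n(v_{k-m})$ remains scalar, so the Leibniz identity applies repeatedly to produce the binomial expansion
\[
\tilde{\mathcal{D}}^n(u^m v_{k-m}) = \sum_{j=0}^{n} \binom{n}{j} \tilde{\mathcal{D}}^j(u^m) \, \tilde{\mathcal{D}}^{n-j}(v_{k-m}).
\]
A Cauchy-product rearrangement of the exponential series, legitimate because every sum involved lives in the finite-dimensional space $\mathbb{C}^{(k)}[u,\mathbf{v}]$, produces the desired multiplicativity. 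This iterated-Leibniz bookkeeping is the only place where care is needed; once it is secured, the theorem follows by a one-line integration.
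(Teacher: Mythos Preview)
Your proposal is correct and follows exactly the approach the paper takes in Section~\ref{induct.sec}: differentiate $e^{t\tilde{\mathcal D}/2}(u^k)$, apply the formula for $\tilde{\mathcal D}(u^k)$, invoke multiplicativity of the exponential on the product $u^m v_{k-m}$, and integrate. The only difference is that you spell out in full the \textquotedblleft standard power series argument\textquotedblright\ the paper leaves implicit---verifying that $\tilde{\mathcal D}$ inherits the Leibniz rule from $\mathcal D$ and $N$, that it preserves scalarity so the rule iterates, and that the resulting binomial expansion yields the Cauchy product---and all of those details are correct.
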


Since in the sums, both $m$ and $k-m$ are always strictly smaller than $k,$ we
can assume, recursively, that $e^{s\mathcal{\tilde{D}}/2}(u^{m}),$
$e^{s\mathcal{\tilde{D}}/2}(v_{k-m}),$ and $e^{s\mathcal{\tilde{D}}/2}(v_{m})$
are all \textquotedblleft known.\textquotedblright\ Let us now use the
recursion to compute a simple example. It follows from Points (\ref{uk}) and
(\ref{vk}) of Theorem \ref{LN.thm} that $\mathcal{\tilde{D}}%
(u)=\mathcal{\tilde{D}}(v_{1})=0,$ so that $e^{t\mathcal{\tilde{D}}/2}(u)=u$
and $e^{t\mathcal{\tilde{D}}/2}(v_{1})=v_{1}.$ Applying (\ref{induct}) with
$k=2$ then gives
\begin{align*}
e^{t\mathcal{\tilde{D}}/2}(u^{2}) &  =u^{2}-\int_{0}^{t}e^{s\mathcal{\tilde
{D}}/2}(u)e^{s\mathcal{\tilde{D}}/2}(v_{1})~ds\\
&  =u^{2}-\int_{0}^{t}uv_{1}~ds\\
&  =u^{2}-tuv_{1}.
\end{align*}
By (\ref{expLD}), we then have%
\begin{equation}
e^{t\mathcal{D}/2}(u^{2})=e^{-t}(u^{2}-tuv_{1}).\label{expLu2}%
\end{equation}
Similarly, we obtain%
\begin{equation}
e^{t\mathcal{D}/2}(v_{2})=e^{-t}(v_{2}-tv_{1}^{2}).\label{expLv2}%
\end{equation}
The results in (\ref{expLu2}) and (\ref{expLv2}) can then be fed into the
induction procedure in (\ref{induct}) to compute $e^{t\mathcal{D}/2}(u^{3})$
and $e^{t\mathcal{D}/2}(v_{3})$, and so on.

Recalling that $\mathcal{D}$ describes the leading-order behavior of
$\Delta_{N}$ on polynomials, (\ref{expLu2}) tells us that%
\[
\mathbf{B}_{t}^{N}(f)(Z)\approx e^{-t}(Z^{2}-tZ\mathrm{tr}(Z)),
\]
where $\approx$ indicates that the norm (in $L^{2}(GL(N;\mathbb{C}),\mu
_{t}^{N};M_{N}(\mathbb{C}))$) of the difference is small. Since, also, a
concentration-of-trace phenomenon tells us that $\mathrm{tr}(Z)\approx1$
(Theorem \ref{concentrate.thm}), we have%
\[
\mathbf{B}_{t}^{N}(f)(Z)\approx e^{-t}(Z^{2}-tZ).
\]
Thus, if $p(u)=u^{2},$ the polynomial $q_{t}$ in Theorem \ref{mainIntro.thm}
is%
\[
q_{t}(z)=e^{-t}(z^{2}-tz),
\]
as claimed in the overview.

More generally, suppose that $p(u)=u^{k}.$ We may compute the associated
polynomial $q_{t}$ by the following two-step process. First, we compute,
inductively, $e^{t\mathcal{\tilde{D}}/2}(u^{k})$---and thus, by \ref{expLD},
$e^{t\mathcal{D}/2}$---using the recursion in (\ref{induct}). Second, we
evaluate each of the variables $v_{k}$ in the expression for $e^{t\mathcal{D}%
/2}(u^{k})$ to the value 1. (Recall that $v_{k}$ is a stand-in for
$\mathrm{tr}(Z^{k})$ and that $\mathrm{tr}(Z^{k})\approx1$ in $L^{2}%
(GL(N;\mathbb{C}),\mu_{t}^{N}).$) We have carried out these computations in
Mathematica with the result that if
\[
p(u)=u^{4}
\]
the polynomial $q_{t}$ in Theorem \ref{mainIntro.thm} is given by%
\[
q_{t}(z)=e^{-2t}\left[  z^{4}-tz^{3}+(4t^{2}-2t)z^{2}+\left(  -\frac{8}%
{3}t^{3}+4t^{2}-t\right)  z\right]  .
\]

The recursive procedure in Theorem \ref{induct.thm} allows us to compute the
heat operator applied to any positive power of $U.$ Using this result, we can
also compute the heat operator applied to a negative power of $U.$ It is
easily seen that the heat operator (as applied to functions $f:U(N)\rightarrow
M_{N}(\mathbb{C})$) commutes with taking adjoints:%
\[
e^{t\Delta_{N}/2}(f^{\ast})=(e^{t\Delta_{N}/2}f)^{\ast}.
\]
Since $U^{-k}=(U^{k})^{\ast}$ for $U\in U(N),$ we see that
\[
e^{t\Delta_{N}/2}(U^{-k})=(e^{t\Delta_{N}/2}U^{k})^{\ast}.
\]

Using this line of reasoning, we can easily prove an analog of Theorem
\ref{mainIntro.thm} for negative powers of $U.$ If $p$ is a polynomial in a
single variable, we can define $p^{N}:U(N)\rightarrow M_{N}(\mathbb{C})$ by
substituting $U^{-1},$ rather than $U,$ into $p.$ Then if $q_{t}$ is the same
polynomial as in Theorem \ref{mainIntro.thm}, the theorem holds with $p_{N}$
replaced by $p^{N}$:%
\[
\lim_{N\rightarrow\infty}\left\Vert \mathbf{B}_{t}^{N}(p^{N})-(q_{t}%
)^{N}\right\Vert _{L^{2}(GL(N;\mathbb{C}),\mu_{t}^{N};M_{N}(\mathbb{C}))}=0.
\]

\section{The magic formulas for computing the Laplacian\label{magic.sec}}

In this section, we explain how one can evaluate $\Delta_{N}$ on trace
polynomial functions. In particular, we will see the origin of the asymptotic
product rule.

\begin{theorem}
\label{magic.thm}Let $\{X_{j}\}$ be any orthonormal basis for $u(N)$ with
respect to the inner product in (\ref{unInner}). Then for all $A,B\in
M_{N}(\mathbb{C})$ we have%
\begin{align}
\sum_{j}X_{j}^{2}  &  =-I\label{magic1}\\
\sum_{j}X_{j}AX_{j}  &  =-\mathrm{tr}(A)I\label{magic2}\\
\sum_{j}\mathrm{tr}(X_{j}A)X_{j}  &  =-\frac{1}{N^{2}}A\label{magic3}\\
\sum_{j}\mathrm{tr}(X_{j}A)\mathrm{tr}(X_{j}B)  &  =-\frac{1}{N^{2}%
}\mathrm{tr}(AB), \label{magic4}%
\end{align}
where $\mathrm{tr}(\cdot)$ is the normalized trace in (\ref{normalizedTrace}).
\end{theorem}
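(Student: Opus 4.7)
The plan is to derive all four identities from a single "completeness" identity, by passing from the real inner product on $u(N)$ to a complex bilinear form on $gl(N;\mathbb{C}) = M_N(\mathbb{C})$ and computing the expansion of an arbitrary matrix in the basis $\{X_j\}$.

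First, I would establish an underlying algebraic identity: for any $X, Y \in u(N)$, the trace $\operatorname{Trace}(XY)$ is automatically real, because $(XY)^* = Y^* X^* = YX$ and hence $\overline{\operatorname{Trace}(XY)} = \operatorname{Trace}(YX) = \operatorname{Trace}(XY)$. Combined with $X^* = -X$, the inner product (\ref{unInner}) simplifies to $\langle X, Y \rangle_N = -N\operatorname{Trace}(XY)$. Thus any orthonormal basis $\{X_j\}$ of $u(N)$ satisfies $\operatorname{Trace}(X_j X_k) = -\delta_{jk}/N$, and, since $gl(N;\mathbb{C}) = u(N) \oplus i u(N)$ as a real vector space, it is simultaneously a $\mathbb{C}$-linear basis of $M_N(\mathbb{C})$ which is self-dual with respect to the non-degenerate $\mathbb{C}$-bilinear form $B(A, C) := -N\operatorname{Trace}(AC)$.

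The key step is the resulting complex expansion formula: for every $A \in M_N(\mathbb{C})$,
\[
A \;=\; \sum_j B(X_j, A)\, X_j \;=\; -N\sum_j \operatorname{Trace}(X_j A)\, X_j .
\]
Rewritten in terms of the normalized trace, this is precisely identity (\ref{magic3}). Reading it entry by entry with $A = E_{pq}$ yields the Fierz-type identity
\[
\sum_j (X_j)_{qp}(X_j)_{ab} \;=\; -\frac{1}{N}\delta_{ap}\delta_{bq},
\]
which is the uniform computational engine for the three remaining statements. Identity (\ref{magic1}) follows by contracting the Fierz identity to form $(X_j^2)_{ab}$ and summing the free middle index; identity (\ref{magic2}) follows by contracting it against the entries of $A$ placed in the middle two slots; identity (\ref{magic4}) follows either from pairing the Fierz identity with itself, or more cleanly by applying $\operatorname{Trace}(B\,\cdot)$ to the expansion formula for $A$.

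The main obstacle is more conceptual than technical: it is the promotion of $\{X_j\}$ from a real-orthonormal basis of the real Lie algebra $u(N)$ to a self-dual \emph{complex} basis of $M_N(\mathbb{C})$ with respect to $B$. Once that identification is justified, each of the four magic formulas reduces to a one-line index contraction, so this passage from the real to the complex setting carries essentially the entire weight of the proof.
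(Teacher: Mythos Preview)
Your argument is correct. The paper (and the references it cites) proves Theorem~\ref{magic.thm} by a different, more computational route: first one checks that each of the four sums is independent of the choice of orthonormal basis for $u(N)$, and then one evaluates the sums by brute force in a convenient explicit basis (e.g., the standard basis built from $iE_{jj}$ and the symmetrized/antisymmetrized off-diagonal matrix units). Your approach instead recognizes that the real orthonormality condition $\langle X_j,X_k\rangle_N=\delta_{jk}$ translates, via $X^\ast=-X$, into the complex self-duality $-N\operatorname{Trace}(X_jX_k)=\delta_{jk}$, which immediately gives the expansion $A=-N\sum_j\operatorname{Trace}(X_jA)X_j$ for every $A\in M_N(\mathbb{C})$; this single identity is (\ref{magic3}), and the remaining three formulas follow by index contraction. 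The advantage of your route is that it is basis-free throughout and makes transparent why (\ref{magic3}) is the ``master'' identity from which the others descend; the paper's route, by contrast, makes the individual formulas concrete and easy to verify by hand, at the cost of a somewhat tedious computation in a fixed basis.
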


These \textquotedblleft magic formulas\textquotedblright\ are established in
Lemma 4.1 of \cite{Sengupta}. One can prove the formulas by first establishing
that the sums are independent of the choice of orthonormal basis and then
computing by brute force in one particular basis. (See also Section 3.1 of
\cite{DHK}.) Note the presence of a factor of $1/N^{2}$ on the right-hand
sides of (\ref{magic3}) and (\ref{magic4}).

\begin{proposition}
\label{lapPoly.prop}For any non-negative integer $k$ and any (possibly empty)
sequence $l_{1},\ldots,l_{M}$ of positive integers, we have%
\[
\Delta_{N}(U^{k}\mathrm{tr}(U^{l_{1}})\cdots\mathrm{tr}(U^{l_{M}}%
))=\mathrm{I}+\mathrm{II}+\mathrm{III},
\]
where%
\begin{align*}
\mathrm{I}  &  =\Delta_{N}(U^{k})\mathrm{tr}(U^{l_{1}})\cdots\mathrm{tr}%
(U^{l_{M}})\\
&  +\sum_{j=1}^{M}\mathrm{tr}(U^{l_{1}})\cdots\widehat{\mathrm{tr}(U^{l_{j}}%
)}\cdots\mathrm{tr}(U^{l_{M}})\cdot\Delta_{N}(\mathrm{tr}(U^{l_{j}})),
\end{align*}
and%
\[
\mathrm{II}=-\frac{2}{N^{2}}U^{k}\sum_{j<m}l_{j}l_{m}\mathrm{tr}(U^{l_{1}%
})\cdots\widehat{\mathrm{tr}(U^{l_{j}})}\cdots\widehat{\mathrm{tr}(U^{l_{m}}%
)}\cdots\mathrm{tr}(U^{l_{M}})\cdot\mathrm{tr}(U^{l_{j}+l_{k}})
\]
and%
\[
\mathrm{III}=-\frac{2}{N^{2}}\sum_{j=1}^{M}kl_{j}U^{k+l_{j}}\mathrm{tr}%
(U^{l_{1}})\cdots\widehat{\mathrm{tr}(U^{l_{j}})}\cdots\mathrm{tr}(U^{l_{M}%
}).
\]

\end{proposition}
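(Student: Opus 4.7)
The plan is to expand $\Delta_{N} = \sum_{j} \tilde{X}_{j}^{2}$ applied to the product $U^{k}\mathrm{tr}(U^{l_{1}})\cdots\mathrm{tr}(U^{l_{M}})$ via the ordinary product rule for a second-order derivation, and then evaluate the cross terms using the magic formulas of Theorem \ref{magic.thm}. The key observation is that for any single vector field $\tilde{X}$, one has, from the definition \eqref{leftInv},
\[
\tilde{X}(U^{k}) = \sum_{i=1}^{k}U^{i}XU^{k-i}, \qquad \tilde{X}(\mathrm{tr}(U^{l})) = l\,\mathrm{tr}(U^{l}X),
\]
the second identity using cyclicity of the trace. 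These two formulas are the only local pieces of input needed.

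For any product $f_{0}f_{1}\cdots f_{M}$, the standard computation
\[
\tilde{X}^{2}(f_{0}\cdots f_{M}) = \sum_{i}\tilde{X}^{2}(f_{i})\prod_{j\neq i}f_{j} + 2\sum_{i<i'}\tilde{X}(f_{i})\tilde{X}(f_{i'})\prod_{j\neq i,i'}f_{j}
\]
gives, after summing over an orthonormal basis $\{X_{j}\}$,
\[
\Delta_{N}(f_{0}\cdots f_{M}) = \sum_{i}\Delta_{N}(f_{i})\prod_{j\neq i}f_{j} + 2\sum_{i<i'}\Bigl(\sum_{j}\tilde{X}_{j}(f_{i})\tilde{X}_{j}(f_{i'})\Bigr)\prod_{j\neq i,i'}f_{j}.
\]
Applying this with $f_{0}=U^{k}$ and $f_{i}=\mathrm{tr}(U^{l_{i}})$ for $i\geq 1$, the ``diagonal'' sum is exactly $\mathrm{I}$. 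What remains is to evaluate the two types of cross terms.

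The first type is the cross term between $\mathrm{tr}(U^{l_{j}})$ and $\mathrm{tr}(U^{l_{m}})$. Here
\[
\sum_{j}\tilde{X}_{j}(\mathrm{tr}(U^{l_{j}}))\tilde{X}_{j}(\mathrm{tr}(U^{l_{m}})) = l_{j}l_{m}\sum_{r}\mathrm{tr}(U^{l_{j}}X_{r})\mathrm{tr}(U^{l_{m}}X_{r}),
\]
and \eqref{magic4} with $A=U^{l_{j}}$, $B=U^{l_{m}}$ collapses the inner sum to $-\tfrac{1}{N^{2}}\mathrm{tr}(U^{l_{j}+l_{m}})$, producing term $\mathrm{II}$. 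The second type is the cross term between $U^{k}$ and $\mathrm{tr}(U^{l_{j}})$, which requires
\[
\sum_{r}\Bigl(\sum_{i=1}^{k}U^{i}X_{r}U^{k-i}\Bigr)\cdot l_{j}\,\mathrm{tr}(U^{l_{j}}X_{r}).
\]
Because $\mathrm{tr}(U^{l_{j}}X_{r})$ is a scalar, we may pull $U^{i}$ and $U^{k-i}$ outside the sum over $r$ and apply \eqref{magic3} with $A=U^{l_{j}}$ to get $\sum_{r}\mathrm{tr}(X_{r}U^{l_{j}})X_{r} = -\tfrac{1}{N^{2}}U^{l_{j}}$. Multiplying by $U^{i}$ on the left and $U^{k-i}$ on the right gives $-\tfrac{1}{N^{2}}U^{k+l_{j}}$, independent of $i$; summing over $i=1,\dots,k$ and including the factor $2l_{j}$ produces term $\mathrm{III}$.

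The main bookkeeping obstacle is just making sure the combinatorial ``hat'' notation for omitted factors lines up correctly between the two kinds of cross terms; the magic formulas themselves do all the analytic work. Formulas \eqref{magic1} and \eqref{magic2} are not needed for this proposition (they enter in the proof of Proposition \ref{lapPower.prop}, which furnishes $\Delta_{N}(U^{k})$ and $\Delta_{N}(\mathrm{tr}(U^{l}))$ appearing in $\mathrm{I}$), so the computation above is self-contained given the magic formulas and the two basic derivative identities stated at the outset.
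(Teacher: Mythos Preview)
Your proof is correct and follows essentially the same approach as the paper: expand $\Delta_N=\sum_j\tilde X_j^2$ on the product via the second-order product rule, identify the diagonal terms as $\mathrm{I}$, and evaluate the two types of cross terms using the magic formulas \eqref{magic3} and \eqref{magic4} to obtain $\mathrm{III}$ and $\mathrm{II}$ respectively. The paper presents this argument through worked examples and then a general verbal description, whereas you write the general product-rule identity directly; the underlying computation is identical.
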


Proposition \ref{lapPoly.prop} is a slight strengthening of Lemma 4.3 in
\cite{Sengupta}. This result can be combined with Proposition
\ref{lapPower.prop} to obtain an explicit formula for the Laplacian of any
trace polynomial. In particular, from Propositions \ref{lapPower.prop} and
\ref{lapPoly.prop}, we can easily obtain the operators $\mathcal{D}$ and
$\mathcal{L}$ in Theorem \ref{LN.thm}.

Note that we \textit{do not} assume the exponents $l_{1},\ldots,l_{M}$ are
distinct. Term I in the proposition is the term in which $\Delta_{N}$ behaves
like a first-order operator; that is, in Term I, we apply the Laplacian to
each factor separately. Since the remaining terms have a factor of $1/N^{2}$
in front, the asymptotic product rule follows from the proposition. In each
entry in Term II, we combine two separate traces, $\mathrm{tr}(U^{l_{j}})$ and
$\mathrm{tr}(U^{l_{k}}),$ into a single trace, $\mathrm{tr}(U^{l_{j}+l_{k}}).$
In each entry in Term III, we combine $U^{k}$ and $\mathrm{tr}(U^{l_{j}})$
into the factor of $U^{k+l_{j}}.$ Terms II and III in Proposition
\ref{lapPoly.prop} arise from (\ref{magic3}) and (\ref{magic4}) in Theorem
\ref{magic.thm}.

We now illustrate the proofs of Propositions \ref{lapPower.prop} and
\ref{lapPoly.prop} by verifying one example of each proposition, using the
magic formulas. It requires only a bit of combinatorics to prove the general
results by the same method.

\textbf{First example}. We illustrate the proof of Proposition
\ref{lapPower.prop} by considering the function
\[
f(U)=U^{2}.
\]
Given a basis element $X_{j}$ in $u(N),$ we may compute the associated
left-invariant vector field $\tilde{X}_{j}$, as in (\ref{leftInv}), using the
product rule:%
\begin{align*}
(\tilde{X}_{j}f)(U)  &  =\left.  \frac{d}{ds}Ue^{sX_{j}}Ue^{sX_{j}}\right\vert
_{s=0}\\
&  =UX_{j}U+U^{2}X_{j}.
\end{align*}
Applying $\tilde{X}_{j}$ again gives%
\[
(\tilde{X}_{j}^{2}f)(U)=UX_{j}^{2}U+UX_{j}UX_{j}+UX_{j}UX_{j}+U^{2}X_{j}^{2}.
\]
To compute $\Delta_{N}f,$ we sum over $j$ and use the magic formulas
(\ref{magic1}) and (\ref{magic2}), with the result that%
\[
(\Delta_{N}f)(U)=-2U^{2}-2U\mathrm{tr}(U).
\]

\textbf{Second example. }We illustrate the proof of Proposition
\ref{lapPoly.prop} by considering the function%
\[
f(U)=U^{2}\mathrm{tr}(U^{2}).
\]
We apply $\tilde{X}_{j}$ as in the previous example, giving%
\begin{align*}
(\tilde{X}_{j}f)(U)  &  =UX_{j}U\mathrm{tr}(U^{2})+U^{2}X_{j}\mathrm{tr}%
(U^{2})\\
&  +U^{2}\mathrm{tr}(UX_{j}U)+U^{2}\mathrm{tr}(U^{2}X_{j}).
\end{align*}
Applying $\tilde{X}_{j}$ a second time gives a total of ten terms:%
\begin{align}
(\tilde{X}_{j}^{2}f)(U)  &  =UX_{j}^{2}U\mathrm{tr}(U^{2})+2UX_{j}%
UX_{j}\mathrm{tr}(U^{2})+U^{2}X_{j}^{2}\mathrm{tr}(U^{2})\nonumber\\
&  +U^{2}\mathrm{tr}(UX_{j}^{2}U)+2U^{2}\mathrm{tr}(UX_{j}UX_{j}%
)+U^{2}\mathrm{tr}(U^{2}X_{j}^{2})\nonumber\\
&  +2UX_{j}U\mathrm{tr}(UX_{j}U)+2UX_{j}U\mathrm{tr}(U^{2}X_{j})\nonumber\\
&  +2U^{2}X_{j}\mathrm{tr}(UX_{j}U)+2U^{2}X_{j}\mathrm{tr}(U^{2}X_{j}).
\label{Xj2}%
\end{align}

We now sum (\ref{Xj2}) over $j$ to obtain $\Delta_{N}f.$ In the first line on
the right-hand side of (\ref{Xj2}), all the derivatives are on the $U^{2}$
factor in front of the trace. Thus, after summing over $j,$ the first line
gives $\Delta_{N}(U^{2})\mathrm{tr}(U^{2}).$ Similarly, the second line on the
right-hand side of (\ref{Xj2}) sums to $U^{2}\Delta_{N}(\mathrm{tr}(U^{2})).$
In the remaining two lines, we move the scalar trace factor next to the factor
of $X_{j}$ outside the trace. Then we cyclically permute the matrices inside
the trace to put the factor of $X_{j}$ first. At that point, we can apply the
magic formula (\ref{magic3}), with the result that each of the four terms on
the left-hand side of (\ref{Xj2}) sums to $-(2/N^{2})U^{4}. $ Thus,
\[
(\Delta_{N}f)(U)=\Delta_{N}(U^{2})\mathrm{tr}(U^{2})+U^{2}\Delta
_{N}(\mathrm{tr}(U^{2}))-\frac{8}{N^{2}}U^{4}.
\]
This result agrees with Proposition \ref{lapPoly.prop}, with Term II being
zero in this case.\bigskip

In general, we can understand the asymptotic product rule this way. Suppose we
want to apply $\Delta_{N}$ to a trace monomial $U^{k}\mathrm{tr}(U^{l_{1}%
})\cdots\mathrm{tr}(U^{l_{M}}).$ If apply the vector field $\tilde{X}_{j}$
twice, we get a large number of terms, each of which has two factors of
$X_{j}$ inserted among the various powers of $U$ in the original function. We
first consider all the terms in which both factors of $X_{j}$ reside in the
same power of $U,$ either both inside the factor of $U^{k}$ or both within the
same trace. After summing on $j,$ these terms will simply apply $\Delta_{N}$
to each one of the factors, either to $U^{k}$ or to $\mathrm{tr}(U^{l_{n}})$
for some $n.$ In the remaining terms, we have the two factors of $X_{j}$ in
two different powers of $U,$ either one in the $U^{k}$ factor and one in one
of the trace factors, or in two different trace factors. In these cases, we
apply the magic formulas (\ref{magic3}) and (\ref{magic4}), both of which have
a factor of $1/N^{2}$ on the right-hand side. Thus, all deviations from
(first-order) product rule behavior are of order $1/N^{2}.$

\section{The two-parameter transform and the generating
function\label{twoParam.sec}}

In \cite{DHK}, we actually consider the \textit{two-parameter} version of the
Segal--Bargmann transform for $U(N),$ as introduced in \cite{DriverHall} and
\cite{newform}. This transform is the unitary map%
\[
B_{s,t}^{N}:L^{2}(U(N),\rho_{s}^{N})\rightarrow\mathcal{H}L^{2}%
(GL(N;\mathbb{C}),\mu_{s,t}^{N})
\]
given by
\[
B_{s,t}^{N}(f)=(e^{t\Delta_{N}/2}f)_{\mathbb{C}},
\]
where $\mu_{s,t}^{N}$ is a certain heat kernel measure on $GL(N;\mathbb{C}), $
and where $s$ and $t$ are positive numbers with $s>t/2.$ Note that the formula
for $B_{s,t}^{N}$ is the same as for $B_{t}^{N}$; only the measures used on
the domain and range spaces depend on the second parameter, $s.$ We may boost
the transform $B_{s,t}^{N}$ to a transform $\mathbf{B}_{s,t}^{N}$ acting on
matrix-valued functions, precisely as in the case of $B_{t}^{N}.$ In
\cite{DHK}, we prove a version of Theorem \ref{mainIntro.thm} for the two
parameter transform in which $\mathbf{B}_{t}^{N}$ is replaced by
$\mathbf{B}_{s,t}^{N}$ and the polynomial $q_{t}$ is replaced by a polynomial
$q_{s,t}.$

The introduction of the second parameter in \cite{DHK} is not merely to prove
a more general result. Rather, this parameter is critical to establishing
certain properties of the \textit{one-parameter} polynomial map $p\mapsto
q_{t}$ in Theorem \ref{mainIntro.thm}. Specifically, we prove that this map
coincides with the \textquotedblleft free Hall transform\textquotedblright%
\ $\mathcal{G}^{t}$ of Biane \cite{Biane2}. 

The way we prove this is as follows. In Section 5.3 of \cite{DHK} we consider
the polynomial $p_{k}^{s,t}$ for which the associated polynomial $q_{s,t}$ is
simply $z^{k},$ and we then consider the generating function for this family
of polynomials:%
\begin{equation}
\phi^{s,u}(z,t):=\sum_{k=1}^{\infty}p_{k}^{s,t}(u)z^{k}.\label{phiGen}%
\end{equation}
This generating function \textquotedblleft encodes\textquotedblright%
\ polynomials $p_{k}^{s,t},$ in the sense that those polynomials can be
computed by evaluating the Taylor coefficients of $\phi^{s,u}(z,t)$ in the $z$
variable. It is actually necessary to consider two additional generating
functions,
\begin{equation}
\psi^{s}(t,z):=\sum_{k=1}^{\infty}\mathrm{tr}\left(  p_{k}^{s,t}(u)\right)
z^{k}\label{psiGen}%
\end{equation}
and%
\begin{equation}
\rho(s,z):=\sum_{k=1}^{\infty}\mathrm{tr}(u^{k})z^{k}.\label{rhoGen}%
\end{equation}
In (\ref{psiGen}) and (\ref{rhoGen}), the trace is evaluated using the
large-$N$ limit of the expectation value of $\mathrm{tr}(U^{k})$ with respect
to the measure $\rho_{s}$; these limiting expectation values were computed
explicitly by Biane \cite{Biane1}.

In Proposition 5.10 of \cite{DHK}, we use (the two-parameter version of) the
recursion in Section \ref{induct.sec} to obtain the following set of partial
differential equations for the generating functions in the previous paragraph,
together with the appropriate initial conditions.

\begin{proposition}
The generating functions in (\ref{phiGen}), (\ref{psiGen}), and (\ref{rhoGen})
satisfy the following holomorphic PDE's, for sufficiently small $z$:%
\[
\]%
\begin{align*}
\frac{\partial\rho(s,z)}{\partial s}  & =-s\rho\frac{\partial\rho}{\partial
z},\quad\rho(0,z)=\frac{z}{1-z}\\
\frac{\partial\psi^{s}(t,z)}{\partial t}  & =z\psi^{s}\frac{\partial\psi^{s}%
}{\partial z},\quad\psi^{s}(0,z)=\rho(s,e^{-s/2}z)\\
\frac{\partial\phi^{s,u}(t,z)}{\partial t}  & =z\psi^{s}\frac{\partial
\phi^{s,u}}{\partial z},\quad\phi^{s,u}(0,z)=\frac{uz}{1-uz}.
\end{align*}

\end{proposition}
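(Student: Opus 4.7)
The plan is to derive each PDE by differentiating the defining power series term-by-term and then collecting the resulting Cauchy convolutions into products of the generating functions themselves.

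For $\rho$: set $\tau_k(s):=\lim_{N\to\infty}\int_{U(N)}\mathrm{tr}(U^k)\,d\rho_s^N(U)$, so that $\rho(s,z)=\sum_{k\geq 1}\tau_k(s)z^k$. I would differentiate in $s$ using the heat-equation identity $\partial_s\!\int\! f\,d\rho_s^N=\tfrac12\!\int\!\Delta_N f\,d\rho_s^N$, apply formula (\ref{lapPower2}) of Proposition \ref{lapPower.prop} to compute $\Delta_N\mathrm{tr}(U^k)$, and factor the expectation of $\mathrm{tr}(U^m)\mathrm{tr}(U^{k-m})$ into the product of expectations by invoking concentration of trace (Theorem \ref{concentrate.thm}). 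The outcome is an autonomous ODE system for $(\tau_k)_{k\geq 1}$ whose generating-function form is the advertised PDE for $\rho$. The initial condition $\rho(0,z)=z/(1-z)$ follows from $\tau_k(0)=1$, since $\rho_0^N$ is a point mass at the identity.

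For $\phi^{s,u}$ and $\psi^s$: I would invoke the two-parameter analog of the recursion (\ref{diff1})--(\ref{diff2}) in Theorem \ref{induct.thm}, which expresses $\partial_t p_k^{s,t}(u)$ and $\partial_t\mathrm{tr}(p_k^{s,t}(u))$ as sums over $1\leq m<k$ of products of the form $p_m^{s,t}\cdot\mathrm{tr}(p_{k-m}^{s,t})$ and $\mathrm{tr}(p_m^{s,t})\cdot\mathrm{tr}(p_{k-m}^{s,t})$, evaluated against the measure $\rho_s$. Multiplying by $z^k$ and summing, the convolutions $\sum_{m=1}^{k-1}m(\cdots)$ reassemble as the Cauchy products $z\psi^s\cdot\partial_z\phi^{s,u}$ and $z\psi^s\cdot\partial_z\psi^s$, giving the two remaining PDEs. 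The initial condition for $\phi^{s,u}$ follows from $p_k^{s,0}(u)=u^k$, which yields $\sum_{k\geq 1}u^k z^k=uz/(1-uz)$; the value of $\psi^s(0,z)$ matches $\rho(s,e^{-s/2}z)$ once one accounts for the eigenvalue-type factor $e^{-sk/2}$ in the normalization of $p_k^{s,t}$ (the analog in this setting of the pullout $e^{t\mathcal{D}/2}=e^{-tN/2}e^{t\tilde{\mathcal{D}}/2}$ used to obtain (\ref{expLD})).

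The principal obstacle will be pinning down the correct two-parameter version of the recursion in Theorem \ref{induct.thm}. The one-parameter proof in Section \ref{induct.sec} rests on the decomposition $\mathcal{D}=-N+\tilde{\mathcal{D}}$ together with the scalar product rule of Point \ref{ppr} in Theorem \ref{LN.thm}; the two-parameter version requires tracking how the second parameter $s$ enters through the trace-evaluation map $\pi_s$ of Theorem \ref{concentrate.thm} and through whatever compensator makes $p_k^{s,t}$ the preimage of $z^k$ under the large-$N$ limit of $\mathbf{B}_{s,t}^N$. A secondary technical point is justifying termwise differentiation of the three series, which demands a priori growth bounds on $p_k^{s,t}(u)$ in $k$ guaranteeing holomorphy of $\phi^{s,u}$, $\psi^s$, and $\rho$ in $z$ near the origin, uniformly in $(s,t)$ on compact subsets; this is what restricts the PDEs to hold only for sufficiently small $z$.
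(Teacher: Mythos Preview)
Your outline is essentially the paper's own approach: the paper states that these PDEs are obtained from the two-parameter version of the recursion in Section~\ref{induct.sec} (deferring details to Proposition 5.10 of \cite{DHK}), and the mechanism is precisely the one you describe---differentiate the defining series termwise, feed in the recursive identities of the form $\partial_t(\cdot)_k=\sum_{m=1}^{k-1}m(\cdot)_m(\cdot)_{k-m}$, and recognize the resulting convolutions as Cauchy products $z\psi^s\partial_z(\cdot)$. Your diagnosis of the two main obstacles (getting the two-parameter recursion right, including the role of $\pi_s$ and the sign coming from the fact that $p_k^{s,t}$ is a \emph{preimage} under the limiting transform, and establishing enough growth control for termwise differentiation) is also accurate.
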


Note that the second and third equations involve the derivatives of $\phi$ and
$\psi$ \textit{with respect to }$t$\textit{\ with }$s$\textit{\ fixed}, an
approach that would not make sense if we considered only the $s=t$ case. We
solve this system of differential equation by the method of characteristics,
with the result (Theorem 1.17 of \cite{DHK}) that $\phi^{s,u}$ is given by the
following implicit formula:%
\[
\phi^{s,u}\left(  t,ze^{\frac{1}{2}(s-t)\frac{1+z}{1-z}}\right)  =\left(
1-uze^{\frac{s}{2}\frac{1+z}{1-z}}\right)  ^{-1}-1.
\]
In particular, when $s=t,$ we obtain the explicit formula%
\begin{equation}
\phi^{t,u}\left(  t,z\right)  =\left(  1-uze^{\frac{t}{2}\frac{1+z}{1-z}%
}\right)  ^{-1}-1.\label{phittGen}%
\end{equation}
The expression in (\ref{phittGen}) is precisely the generating function for
Biane's transform $\left(  \mathcal{G}^{t}\right)  ^{-1}$ (after correcting a
typographical error in \cite{Biane2}).


\begin{thebibliography}{999}                                                                                              %


\bibitem[AHS]{AHS}S. Albeverio, B. C. Hall, and A. N. Sengupta, The
Segal-Bargmann transform for two-dimensional Euclidean quantum Yang-Mills.,
\textit{Infin. Dimens. Anal. Quantum Probab. Relat. Top.} \textbf{2} (1999), 27--49.

\bibitem[AS]{AnshelevichSengupta}M. Anshelevich and A. N. Sengupta, Quantum
free Yang-Mills on the plane, \textit{J. Geom. Phys.} \textbf{62} (2012), 330--343.

\bibitem[Bar]{Bargmann}V. Bargmann, On a Hilbert space of analytic functions
and an associated integral transform, \textit{Comm. Pure Appl. Math.}
\textbf{14} (1961), 187--214.

\bibitem[Bi1]{Biane1}P. Biane, Free Brownian motion, free stochastic calculus
and random matrices. \textit{In}: Free probability theory (D. Voiculescu,
Ed.), 1--19, Fields Inst. Commun., \textbf{12}, Amer. Math. Soc., Providence,
RI, 1997.

\bibitem[Bi2]{Biane2}P. Biane, Segal-Bargmann transform, functional calculus
on matrix spaces and the theory of semi-circular and circular systems,
\textit{J. Funct. Anal.} \textbf{144} (1997), 232--286.

\bibitem[Ceb]{Ceb}G. C\'{e}bron, Free convolution operators and free Hall
transform, preprint arXiv:1304.1713v3 [math.PR].

\bibitem[Dr]{Driver}B. K. Driver, On the Kakutani-It\^{o}-Segal-Gross and
Segal-Bargmann-Hall isomorphisms, \textit{J. Funct. Anal.} \textbf{133}
(1995), 69--128.

\bibitem[DH]{DriverHall}B. K. Driver and B. C. Hall, Yang-Mills theory and the
Segal-Bargmann transform, \textit{Comm. Math. Phys.} \textbf{201} (1999), 249--290.

\bibitem[DG]{DriverGross}B. K. Driver and L. Gross, Hilbert spaces of
holomorphic functions on complex Lie groups. \textit{In}: New trends in
stochastic analysis (K. Elworthy, S. Kusuoka, and I. Shigekawa, Eds.), pp.
76-106, World Scientific, Singapore, 1997.

\bibitem[DHK]{DHK}B. K. Driver, B. C. Hall, and T. Kemp, The large-$N$ limit
of the Segal--Bargmann transform on $\mathbb{U}_{N},$ to appear in \textit{J.
Funct. Anal.}, preprint arXiv:1305.2406v2 [math.FA].

\bibitem[Go1]{Go1}M. Gordina, Holomorphic functions and the heat kernel
measure on an infinite-dimensional complex orthogonal group, \textit{Potential
Anal.} \textbf{12} (2000), 325--357.

\bibitem[Go2]{Go2}M. Gordina, Heat kernel analysis and Cameron-Martin subgroup
for infinite dimensional groups, \textit{J. Funct. Anal.} \textbf{171} (2000), 192--232.

\bibitem[Gr]{GrossUniqueness}L. Gross, Uniqueness of ground states for
Schr\"{o}dinger operators over loop groups, \textit{J. Funct. Anal.}
\textbf{112} (1993), 373--441.

\bibitem[GM]{GrossMalliavin}L. Gross and P. Malliavin, Hall's transform and
the Segal-Bargmann map. \textit{In}: It\^{o}'s stochastic calculus and
probability theory (M. Fukushima, N. Ikeda, H. Kunita, and S. Watanabe, Eds.),
pp. 73-116, Springer-Verlag, Berlin/New York, 1996.

\bibitem[Ha1]{H1}B. C. Hall, The Segal-Bargmann \textquotedblleft coherent
state\textquotedblright\ transform for compact Lie groups, \textit{J. Funct.
Anal.} \textbf{122} (1994), 103--151.

\bibitem[Ha2]{Bull}B. C. Hall, Harmonic analysis with respect to heat kernel
measure, \textit{Bull. Amer. Math. Soc.} (N.S.) \textbf{38} (2001), 43--78.

\bibitem[Ha3]{newform}B. C. Hall, A new form of the Segal-Bargmann transform
for Lie groups of compact type, \textit{Canad. J. Math.} \textbf{51} (1999), 816--834.

\bibitem[HS1]{HallSengupta}B. C. Hall and A. N. Sengupta, The Segal-Bargmann
transform for path-groups, \textit{J. Funct. Anal.} \textbf{152} (1998), 220--254.

\bibitem[Kem]{Kemp}T. Kemp, Heat kernel empirical laws on $\mathbb{U}_{N}$ and
$\mathbb{GL}_{N},$ preprint arXiv:1306.2140v1 [math.PR].

\bibitem[Lev]{levy}T. L\'{e}vy, The master field on the plane, preprint
arXiv:1112.2452v2 [math-ph].

\bibitem[Rai]{Rains}E. M. Rains, Combinatorial properties of Brownian motion
on the compact classical groups, \textit{J. Theoret. Probab.} \textbf{10}
(1997), 659--679.

\bibitem[Se1]{Segal1}I. E. Segal, Mathematical problems of relativistic
physics. With an appendix by George W. Mackey. \textit{In}: Lectures in
Applied Mathematics, Proceedings of the Summer Seminar, Boulder, Colorado,
1960, Vol. II. (M. Kac, Ed.), pp. 1-131, American Math. Soc., Providence,
Rhode Island, 1963.

\bibitem[Se2]{Segal2}I. E. Segal, The complex-wave representation of the free
boson field. \textit{In}: Topics in functional analysis: Essays dedicated to
M.G. Krein on the occasion of his 70th birthday (I. Gohberg and M. Kac, Eds.)
pp. 321-343, Advances in Mathematics Supplementary Studies, Vol. 3, Academic
Press, New York, 1978.

\bibitem[Sen]{Sengupta}A. N. Sengupta, Traces in two-dimensional QCD: the
large-N limit. \textit{In}: Traces in number theory, geometry and quantum
fields (S. Albeverio, M. Marcolli, S. Paycha and J. Plazas, Eds.), pp.
193--212, Aspects Math., E38, Friedr. Vieweg, Wiesbaden, 2008.

\bibitem[Sin]{Singer}I. M. Singer, On the master field in two dimensions.
\textit{In}: Functional Analysis on the Eve of the 21st Century, Volume I (S.
Gindikin et al., Eds.) pp. 263--281, Birkh\"{a}user 1995.
\end{thebibliography}
\end{document}